\def\zz{{\bf Z}}
\def\ff{{\bf F}}
\def\qq{{\bf Q}}
\def\cc{{\bf C}}
\def\co{\colon\thinspace}
\def\cs{\mathop{\#}}
\def\calk{\mathcal{K}}
  \def\call{\mathcal{L}}
  \def\calb{\mathcal{B}}
 \newcommand\spin{\mathfrak s}
  \newcommand\spint{\mathfrak t}
\newcommand{\Sign}{\operatorname{Sign}}
\newcommand{\Arf}{\operatorname{Arf}}
\newcommand{\lk}{\operatorname{lk}}
\newtheorem{theorem}{Theorem}
\newtheorem{lemma}[theorem]{Lemma}
\newtheorem{corollary}[theorem]{Corollary}
\theoremstyle{definition}
\newtheorem{definition}[theorem]{Definition}
\def\co{\colon\thinspace}
\numberwithin{equation}{section}
\begin{document}

\title{The nonorientable four-genus of knots}
\author{Patrick M. Gilmer}
 \author{Charles Livingston}
 
 \thanks{The first author was partially supported by  NSF-DMS-0604580,  NSF-DMS-0905736}  
 \thanks{The second author was partially supported by  NSF-DMS-0707078, NSF-DMS-1007196}

 \address{Patrick Gilmer: Department of Mathematics, Louisiana State University, Baton Rouge, LA 70803}
\email{gilmer@math.lsu.edu}
 
\address{Charles Livingston: Department of Mathematics, Indiana University, Bloomington, IN 47405 }
\email{livingst@indiana.edu}

\subjclass[2000]{57M25}

 \begin{abstract}  
 We develop obstructions to a knot $K \subset S^3$ bounding a smooth punctured Klein bottle  in $B^4$.  The simplest of these is based on the linking form of the 2--fold branched cover of $S^3$ branched over $K$.  Stronger obstructions are  based on the Ozsv\'ath-Szab\'o correction term in Heegaard-Floer homology, along with the $G$--signature theorem and the Guillou-Marin generalization of Rokhlin's theorem.  We also apply Casson-Gordon theory to  show that for every $n>1$ there exists a knot that does not bound a topologically embedded {\it ribbon} nonorientable surface $F$ in $B^4$ with first Betti number $\beta_1 (F) <  n$.  
 
  \end{abstract}

\maketitle

Since  the   1960s, steady progress has been achieved in determining the 4--genus of knots in $S^3$.  Highlights   include the early work   of Fox-Milnor~\cite{f,fm} and Murasugi~\cite{murasugi}, advances made by Tristram~\cite{tristram} and Levine~\cite{le2}, and the development of Casson-Gordon theory~\cite{cg}.  Freedman's work on 4--dimensional topological surgery opened up our understanding of the 4--genus in the topological category~\cite{freedman-quinn}.  The introduction of gauge theory and Seiberg-Witten theory led to further progress, culminating with the Kronheimer-Mrowka  proof of the Milnor conjecture related to the 4--genus of torus knots~\cite{km2}. 
Cochran, Orr and Teichner~\cite{cot}  extended the slice obstructions of Levine and Casson-Gordon into an infinite sequence of obstructions. 
The application of Heegaard-Floer theory by 
 Ozsv\'ath-Szab\'o~\cite{os3} and of Khovanov homology
 by Rasmussen~\cite{ra} has led to a series of new developments.

In contrast to this, our understanding of which nonorientable surfaces in the 4--ball can bound a given  knot has been extremely limited.
For a connected surface $F$ with nonempty connected boundary, let $h(F) =  \beta_1(F)$, the first Betti number: $\beta_1(F) = \dim H_1(F, \qq) $.  If $F$ is orientable   of genus $g$, then $h(F) = 2g$.   Let $h(K) = \min\{h(F)\ |\ \partial F = K\}$, where the minimum is taken over  smoothly embedded nonorientable surfaces in $B^4$ bounded by $K$.  We call this the  nonorientable 4--genus of $K$.  The nonorientable 4--genus clearly  behaves much differently than the 4--ball genus; as an example, the  $(2,2n+1)$--torus knot has 4--ball genus $n$, but nonorientable 4--genus 1.
  
In 1975,  Viro~ \cite{viro2} proved that the figure eight knot, $4_1$,  cannot bound a Mobius band in $B^4$. His method was to study the Witt class  of the intersection forms of 4--manifolds arising as the branched cover of the 4--ball branched over nonorientable surfaces.  
 Twenty years later, Yasuhara~\cite{yasuhara}  applied  Guillou and  Marin's   ~\cite{gm} generalization of Rokhlin's theorem (concerning the signature modulo  sixteen of closed smooth spin 4--manifolds) to formulate an obstruction to a knot bounding a Mobius band; this obstruction is based on the signature and Arf invariant of the knot  and was applied to the figure eight and granny knot, $3_1 \cs 3_1$. See Theorem \ref{ythm} below.  

We let $(H_1(M(K)), \lk)$ denote the linking form on the $2$--fold branched cover of $S^3$ branched over $K$. 
In 2000, using a result of the first author ~\cite{gi3}, Murakami and Yasuhara  \cite[Theorem 2.5]{my} found an obstruction, in terms of $(H_1(M(K)), \lk)$,  to a knot bounding a 
nonorientable surface of genus $g$;  see Theorem \ref{mythm}.  The only application of this theorem given in \cite{my}  was that $h(4_1)=2$.  We will use Theorem \ref{mythm} to prove  Theorem \ref{theoremklein},   which gives some interesting examples of knots with $h(K) \ge 3.$ In particular,
we show:

\vskip.1in
\noindent{\bf Theorem A.}$$h(4_1 \cs 5_1) = 3.$$
\vskip.1in

\noindent But we will also find 
 knots not bounding punctured Klein bottles  for which the theorem of Murakami and Yasuhara does not provide an obstruction:

\vskip.1in
\noindent{\bf Theorem B.}
{\it There exist knots $K$ such that  $h(K) \ge 3$ and $(H_1(M(K)), \lk)$  has a presentation of rank $2$.}\vskip.1in  

\noindent (As described in Appendix~\ref{alglK}, such a presentation of $(H_1(M(K)), \lk)$ consists of a $2\times 2$ integer matrix $A$ such that $H_1(M(K)) \cong \zz^2/A\zz^2$ and the linking form is given by $A^{-1}$ with respect to an appropriate generating set.)
\vskip.1in

 Let $h^r(K) = \min\{h(F)\ |\ \partial F = K \text{ and $F$ is ribbon and nonorientable} \}$.   This is called the nonorientable ribbon genus.
  Conjecturally, $h^r(K) = h(K)$ for all $K$.  We prove:

\vskip.1in
\noindent{\bf Theorem C.}
{\it For every $N$, there exists a knot $K$ such that $h^r(K) \ge  N$.}\vskip.1in

One set of examples is built from a particular knot,  
$D_{6}$, the $6$--twisted double of the unknot.  Specifically, if $K = n D_{6}  = \partial F$, 
where $F$ is a nonorientable ribbon surface bounded by $K$, then  $h(F) \ge \frac{n}{2}$.   This knot is of particular interest in that it was the first example of an algebraically slice knot that is not slice, discovered by Casson and Gordon~\cite{cg}.     
  \vskip.1in

\noindent{\bf Outline}
We will work in the smooth category until Section~\ref{topsection}, where we describe the extent to which our work applies in the topological category and provide examples distinguishing the two categories with respect to the nonorientable genus.

In Section~\ref{linkobstruct},  we give the proof of the Murakami-Yasuhara theorem and  we use this  to obstruct  knots from bounding  punctured Klein bottles in $B^4$.   Since the obstruction is purely homological, it cannot detect the distinction between the smooth and topological categories.

In Section~\ref{yasuharasection},  we present a short proof of Yasuhara's formula which relates the signature and  the Arf invariant  of a knot  that bounds a Mobius band  in $B^4$.   Note that   the derivation (both Yasuhara's and ours) depends on a generalization of   Rokhlin's Theorem  due  to Guillou and Marin~\cite{gm}, and thus it holds only in the smooth category. 
 
There are three steps to developing stronger obstructions to a knot bounding a  Klein bottle.   For any knot $K$, let $M(K)$ denote the 2--fold branched cover of $S^3$ branched over $K$.  If $K$ bounds a punctured Klein bottle  $F \subset B^4$, then $M(K)$ bounds a smooth, compact, oriented 4--manifold $W$ with second Betti number $\beta_2(W) = 2$.
Here we take for  $W$  
  the 2--fold branched cover of $B^4$ branched over $F$,
  which we denote $W(F)$.
    In the first step, Section~\ref{sectionposdef}, we 
   provide conditions on $K$ that ensure that  $W(F)$ cannot have a
  definite intersection form;  these are based solely on the signature and Arf invariant of $K$. This argument expands upon our derivation of Yasuhara's formula.

  In Section~\ref{indefinitesection} we  present obstructions to   $M(K)$ 
  bounding  a $W$
  with the   properties above and with  an
  indefinite intersection form; these are based on the linking form of $M(K)$. 
   In the third step, presented in Section~\ref{sectionnegdef}, knots are modified without altering previously established properties so that the new  $M(K)$ does not bound such a $W$ with negative definite intersection form;  this work uses the correction term of Heegaard-Floer homology~\cite{os2}.  We conclude work surrounding Theorem~B    in Section~\ref{sectionexplicitkknot}, where we construct examples for which all three  obstructions apply.

Section~\ref{cgsection} presents background in Casson-Gordon theory and Section~\ref{thmbsection} is  devoted to the proof of Theorem C.  Section~\ref{topsection} discusses the topological category.  In particular, we prove:
\vskip.1in
\noindent{\bf Theorem D} {\it There exists a knot $K$ that bounds a topological Mobius band in $B^4$, but does not bound a smooth Mobius band.}\vskip.1in

The appendix provides  background concerning intersection forms of 4--manifolds and  linking forms.

 \vskip.1in
\noindent{\bf Nonorientable surfaces in $S^3$} There is a notion of the nonorientable 3--genus of a knot, sometimes called the {\it crosscap number}.  This clearly provides an upper bound for the nonorientable 4--genus, but one that can be quite weak.  We wish to note here some of the relevant research on this topic.  Basic foundational work was begun by Clark in~\cite{clark}.  Classes of knots for which there are good results include torus knots~\cite{mattsize, tera}, 2--bridge knots~\cite{hira-tera}, and pretzel knots~\cite{ichi-miz}.  The interplay between the nonorientable 3--genus and knot concordance was studied in~\cite{liv, zhang}.

 \vskip.1in 
\noindent{\bf Acknowledgements}  Many people have helped us in this work, providing us with important background material and stimulating conversation.  In particular we wish to thank Jim Davis, Allan Edmonds, Matt Hedden, Paul Kirk,   and Peter Kronheimer. Special thanks go to  Oleg Viro for his help in understanding his early work studying nonorientable surfaces and knot theory.  We also thank the referee for contributions that greatly improved the exposition.

\section{Murakami-Yasuhara Theorem and Consequences}\label{linkobstruct}
The main new result in this section is  that the  linking form of the 2--fold branched cover of a knot provides obstructions to a knot bounding a punctured Klein bottle in $B^4$. 

\begin{lemma}\label{bettilemma} Let $K\subset S^3$ bound a connected surface $F\subset B^4$ and let $W(F)$ be the 2--fold branched cover  of $B^4$ branched over $F$.  Then $\beta_2(W(F)) = \beta_1(F)$.

\end{lemma}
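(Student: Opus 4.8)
The plan is to compute $H_*(W(F);\qq)$ by exploiting the covering involution, and then to pin down the one remaining rank with an Euler characteristic count. Write $W=W(F)$, let $\tau\colon W\to W$ be the deck involution of the branched cover $p\colon W\to B^4$, and work with $\qq$ coefficients throughout. Because $\qq[\zz/2]$ is semisimple, $H_*(W;\qq)$ splits as the sum of the $(+1)$- and $(-1)$-eigenspaces of $\tau_*$, and the transfer identifies the $(+1)$-eigenspace with $H_*(B^4;\qq)$, which vanishes in positive degrees since $B^4$ is contractible. Thus $\beta_i(W)=\dim H_i(W;\qq)^-$ for every $i\ge 1$, and the whole problem reduces to understanding the $(-1)$-eigenspace.

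First I would split $W=\widetilde X_0\cup\widetilde N$, where $N$ is a closed tubular neighborhood of $F$, $\widetilde N$ is its branched double cover (deformation retracting onto the branch surface $\widetilde F\cong F$), and $\widetilde X_0$ is the unbranched double cover of the exterior $X_0=B^4\setminus\mathring{N}$ classified by the map $\pi_1(X_0)\to\zz/2$ sending a meridian of $F$ to the nontrivial element. Two of the pieces contribute nothing to the $(-1)$-eigenspace: since $\tau$ fixes $\widetilde F$ pointwise it acts trivially on $H_*(\widetilde N;\qq)\cong H_*(F;\qq)$, so $H_*(\widetilde N;\qq)^-=0$; and the separating submanifold $\widetilde X_0\cap\widetilde N$ double-covers the circle bundle $S(\nu)\to F$ whose fiber is a meridian, so the associated sign system $\qq^-$ (meridian $\mapsto -1$) is nontrivial on each fiber, the fiber has vanishing twisted homology, and a Serre spectral sequence gives $H_*(S(\nu);\qq^-)=0$. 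The $\tau$-equivariant Mayer--Vietoris sequence then collapses, in the $(-1)$-eigenspace, to isomorphisms $H_k(W;\qq)^-\cong H_k(\widetilde X_0;\qq)^-\cong H_k(B^4\setminus F;\qq^-)$ for all $k$, reducing the problem to the twisted homology of the honest (unbranched) complement.

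To finish, it suffices to show $H_1(B^4\setminus F;\qq^-)=0$; the other ranks then fall out formally. Indeed $\beta_4(W)=0$ since $W$ is a compact $4$--manifold with nonempty boundary, and granting $\beta_1(W)=0$, Lefschetz duality together with the connectivity of $\partial W=M(K)$ forces $\beta_3(W)=0$ (explicitly $H_3(W;\qq)\cong H^1(W,\partial W;\qq)=0$, using $H^1(W;\qq)=0$ and that $H^0(W;\qq)\to H^0(\partial W;\qq)$ is an isomorphism). The branched-cover Euler characteristic $\chi(W)=2\chi(B^4)-\chi(F)=1+\beta_1(F)$, with $\chi(F)=1-\beta_1(F)$, then reads $1-0+\beta_2(W)-0+0$, giving $\beta_2(W)=\beta_1(F)$ at once. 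The vanishing $H_1(B^4\setminus F;\qq^-)=0$ I would extract from the geometry of the normal bundle $\nu$ of $F$ via a twisted Thom isomorphism, whose point is that the orientation local system $\mathrm{or}(\nu)$ of the \emph{nonorientable} bundle coincides with the restriction of $\qq^-$; the two twists cancel, so the relevant groups reduce to untwisted homology of $F$ supported away from degree $1$.

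The main obstacle is exactly the nonorientability of $F$: since $\nu$ is nonorientable one cannot invoke an ordinary Thom isomorphism or Gysin sequence with constant coefficients, and a naive rank count is unavailable. The heart of the argument is therefore the identification $\mathrm{or}(\nu)=\qq^-$ pulled back along a push-off of $F$ into $B^4\setminus F$, which says that the sign twist demanded by the double cover is precisely what is needed to orient $\nu$; making this canonical and tracking it through the Thom isomorphism is where the care lies. As a cross-check one can instead put $F$ in banded form and use branched-cover Kirby calculus: the double branched cover is then $B^4$ with one $2$--handle per band, so $W$ has a single $0$--handle and $\beta_1(F)$ two-handles, yielding $\beta_1(W)=\beta_3(W)=0$ and $\beta_2(W)=\beta_1(F)$ directly.
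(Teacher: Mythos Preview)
Your proof reaches the same endgame as the paper's: once $\beta_1(W)=\beta_3(W)=\beta_4(W)=0$ are in hand, the identity $\chi(W)=2\chi(B^4)-\chi(F)=1+\beta_1(F)$ forces $\beta_2(W)=\beta_1(F)$.  The paper obtains $H_1(W;\qq)=0$ in one line by citing~\cite[Lemma~2]{massey} and then deduces $\beta_3=0$ from duality and the long exact sequence exactly as you do; everything else you wrote (eigenspace splitting, equivariant Mayer--Vietoris, twisted coefficients on the complement) is extra scaffolding aimed at reproving that cited input by hand.

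That scaffolding is sound up through the identification $H_k(W;\qq)^-\cong H_k(B^4\setminus F;\qq^-)$, but the twisted Thom isomorphism step is not convincing as written: $\qq^-$ does not extend across $F\subset B^4$, so there is no long exact sequence of the pair $(B^4,B^4\setminus F)$ with these coefficients into which a Thom isomorphism feeds, and ``untwisted homology of $F$ supported away from degree~$1$'' is puzzling when $H_1(F;\qq)\ne 0$.  Your banded-form cross-check is the cleaner rescue, but bear in mind that a general smooth $F$ need not be ribbon: after isotoping to a single index-$0$ critical point one may still have index-$2$ points, so $W(F)$ carries one $0$--handle, $b$ two-handles and $c$ three-handles with $b-c=\beta_1(F)$.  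That handle count already gives $\beta_1(W)=0$ (no $1$--handles), after which your duality and Euler-characteristic arguments finish the proof; but $\beta_2(W)=\beta_1(F)$ and $\beta_3(W)=0$ do not fall out of the handle description alone unless $c=0$.
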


\begin{proof}   The $2$--fold branched cover of $S^3$ branched along $K$, $M(K)$,  is a rational homology sphere.  We have that $H_1(W(F),\qq) = 0$ (see, for instance,~\cite[Lemma 2]{massey}) and from duality and the long exact sequence of the pair $(W(F),M(K))$, $H_3(W(F), \qq) = 0$.   Since $F$ is smooth, it can be viewed as subcomplex of a triangulation of $B^4$.  Counting simplices  in the base and  the cover we have $\chi(W(F)) = 2 \chi(B^4)- \chi(F) = 2 - (1 - \beta_1(F))$.  Since $\chi(W(F)) = 1 +\beta_2(W(F))$, it follows that 
 $\beta_2(W(F)) = \beta_1(F)$, as desired.
\end{proof}

\begin{theorem}[Murakami-Yasuhara]  \label{mythm}  Let $K \subset S^3$ be a knot.  The linking form $(H_1(M(K)), \lk)$ splits as a direct sum $(G_1, \beta_1) \oplus (G_2, \beta_2)$ where $(G_2, \beta_2) $ is metabolic and $(G_1, \beta_1)$ has a presentation of rank $h(K)$. 
\end{theorem}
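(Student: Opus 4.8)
The plan is to realize the invariant geometrically and then reduce to a purely algebraic statement about linking forms of boundaries of $4$--manifolds. First I would choose a smoothly embedded nonorientable surface $F \subset B^4$ with $\partial F = K$ and $\beta_1(F) = h(K)$, and set $W = W(F)$, the $2$--fold branched cover of $B^4$ along $F$. Then $\partial W = M(K)$ is a rational homology sphere, and by Lemma~\ref{bettilemma} the intersection form of $W$ lives on a lattice of rank $\beta_2(W) = \beta_1(F) = h(K)$. Thus the theorem follows once one knows the general fact, which is the result of the first author used in~\cite{my}: if a rational homology sphere $M$ bounds a compact oriented $4$--manifold $W$, then $(H_1(M),\lk)$ splits as an orthogonal direct sum of a form admitting a presentation matrix of size $\beta_2(W)$ and a metabolic form (see~\cite{gi3}). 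Applying this to $W(F)$ and padding the presentation with $1\times 1$ blocks to reach size exactly $h(K)$ then yields the statement.

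To set up the algebraic fact I would run the long exact sequence of the pair $(W,M)$ together with Poincar\'e--Lefschetz duality $H_2(W,M) \cong H^2(W)$ and the universal coefficient theorem $H^2(W) \cong \operatorname{Hom}(H_2(W),\zz) \oplus \operatorname{Ext}(H_1(W),\zz)$. Since $M$ is a rational homology sphere, $H_2(M)=0$, so $j_*\colon H_2(W) \to H_2(W,M)$ is injective with cokernel $T := \ker\bigl(H_1(M)\to H_1(W)\bigr)$. Writing $L := H_2(W)/\mathrm{tors}$ and letting $Q$ be the intersection form with adjoint $\widehat{Q}\colon L \to L^{*}$, the free--to--free part of $j_*$ is exactly $\widehat{Q}$. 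For $W=W(F)$ one has $\beta_1(W)=0$ (as in the proof of Lemma~\ref{bettilemma}), and chasing the rational sequence shows $Q$ is nonsingular; hence $\operatorname{coker}(\widehat{Q}) = \zz^{h(K)}/Q\zz^{h(K)}$ is a finite group carrying the linking form $Q^{-1}\bmod\zz$. This is the candidate for $(G_1,\beta_1)$, presented by the $h(K)\times h(K)$ matrix $Q$, with no radical contributing to the metabolic part.

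It remains to identify the complementary summand and show it is metabolic, and this is the crux. The extra part of $H_1(M)$ beyond $\operatorname{coker}(\widehat{Q})$ arises from the torsion of $H_1(W)$ interacting with the boundary, through the $\operatorname{Ext}$ term above and the image of $i_*\colon H_1(M)\to H_1(W)$. The mechanism making this residual pairing metabolic is a \emph{half lives, half dies} argument for the torsion linking form: the subgroup $T$ that dies in $H_1(W)$ is balanced against its linking annihilator, so once the nondegenerate $h(K)$--dimensional piece carried by $Q$ is split off, what remains admits a self-annihilating subgroup of the correct order. Making this precise --- disentangling the intersection form from the $\operatorname{Ext}$ contributions, proving the splitting is orthogonal, and exhibiting the explicit metabolizer (and, in the abstract lemma, absorbing the radical of a possibly degenerate $Q$ into the metabolic summand) --- is the main obstacle, and is exactly what is established in~\cite{gi3}. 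I would therefore organize the write-up to isolate that lemma, verify its hypotheses for $W(F)$ (connectedness, orientability, and $\partial W(F)=M(K)$ a rational homology sphere), and then read off the decomposition with $G_1$ of presentation rank $h(K)$ and $(G_2,\beta_2)$ metabolic.
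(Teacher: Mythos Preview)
Your proposal is correct and follows essentially the same route as the paper: choose a minimizing nonorientable $F$, pass to $W=W(F)$, invoke Lemma~\ref{bettilemma} to get $\beta_2(W)=h(K)$, and then apply the splitting result of~\cite{gi3} (stated in the paper as Lemma~\ref{githm}) to $(W,M(K))$. Your additional sketch of how that lemma is proved is accurate in outline; the only superfluous step is the ``padding with $1\times 1$ blocks,'' since $\beta_2(W(F))=h(K)$ on the nose and the intersection form on $W(F)$ is already nondegenerate.
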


\begin{proof}  Let $W$ be a compact 4--manifold with connected boundary $\partial W$  satisfying $H_1(\partial W, \qq) = 0$.  According to~\cite{gi3},  $( H_1(\partial W), \lk)$ splits as a direct sum $(G_1, \beta_1) \oplus (G_2, \beta_2)$ where $(G_2, \beta_2)$ is metabolic and $(G_1, \beta_1)$ is presented by a matrix representing the intersection form of $W$.  Details of this result are presented as   Lemma~\ref{githm} in the appendix.   Combining this with Lemma~\ref{bettilemma} yields a proof of the theorem. (This argument is  the same as in the proof given in \cite{my}.)
\end{proof}

Here is an application of this theorem along the lines of \cite[Example 2.8]{my} which discussed the figure eight.

\begin{corollary}\label{theoremmobius} Suppose that $H_1(M(K)) = \zz_n$ where $n$ is the product of primes, all with odd exponent.  Then if $K$ bounds a Mobius band in $B^4$, there is a  generator $a \in H_1(M(K))$ such that $\lk(a,a) = \pm 1/n$.
\end{corollary}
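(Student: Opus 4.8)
The plan is to translate the geometric hypothesis into the language of Theorem~\ref{mythm}. A Mobius band $F$ has $\beta_1(F)=1$, so if $K$ bounds a Mobius band then $h(K)\le 1$; since $K$ bounds a nonorientable surface we also have $h(K)\ge 1$, whence $h(K)=1$. Feeding this into Theorem~\ref{mythm}, the linking form $(H_1(M(K)),\lk)$ splits as $(G_1,\beta_1)\oplus(G_2,\beta_2)$ with $(G_2,\beta_2)$ metabolic and $(G_1,\beta_1)$ carrying a presentation of rank $h(K)=1$. By the description of a presentation recalled in Appendix~\ref{alglK}, a rank-$1$ presentation of $(G_1,\beta_1)$ is a $1\times1$ integer matrix $(d)$ with $G_1\cong\zz/d\zz$ and $\lk(a,a)=1/d$ on the associated generator $a$.

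Next I would exploit that $H_1(M(K))=\zz_n$ is cyclic. Any direct sum decomposition $\zz_n\cong G_1\oplus G_2$ of a cyclic group forces $G_1\cong\zz_{n_1}$ and $G_2\cong\zz_{n_2}$ with $n_1n_2=n$ and $\gcd(n_1,n_2)=1$; in particular $n_2\mid n$. The structural input I would invoke is that a metabolic linking form on a finite abelian group has order a perfect square: if $L=L^\perp$ is a metabolizer, then nondegeneracy gives $|L^\perp|=|G_2|/|L|$, so $|L|^2=|G_2|$. Hence $n_2=|G_2|$ is a perfect square.

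The crux — and the only place the hypothesis on $n$ enters — is the elementary number theory that follows. Writing $n=p_1^{e_1}\cdots p_k^{e_k}$ with every $e_i$ odd, the coprimality $\gcd(n_1,n_2)=1$ together with $n_2\mid n$ shows that $n_2$ is a product of a subset of the full prime-power factors $p_i^{e_i}$; for $n_2$ to be a perfect square each factor appearing must itself be a square, which is impossible for odd $e_i$ unless that factor is absent. Thus $n_2=1$, so $G_2=0$ and $G_1=\zz_n=H_1(M(K))$. The rank-$1$ presentation then reads $\zz/d\zz\cong\zz_n$, forcing $d=\pm n$, and the generator $a$ satisfies $\lk(a,a)=1/d=\pm 1/n$, as required. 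I expect no serious obstacle; the only care needed is to confirm that the appendix's notion of a rank-$1$ presentation pins down the self-linking of the generator to exactly $1/d$, rather than merely to a square class.
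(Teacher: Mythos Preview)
Your proof is correct and follows essentially the same route as the paper: apply Theorem~\ref{mythm} with $h(K)=1$, then use the odd-exponent hypothesis to kill the metabolic summand $G_2$, leaving a rank-one presentation $(\pm n)$ of the full linking form. The only cosmetic difference is that the paper argues primary-summand-by-primary-summand (``no primary summand is metabolic''), whereas you use the equivalent global observation that $|G_2|$ must be a perfect square.
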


\begin{proof} One has that $(H_1(M(K)), \lk)$ splits as a direct sum $(G_1, \beta_1) \oplus (G_2, \beta_2)$ where $(G_2, \beta_2) $ is metabolic and $(G_1, \beta_1)$ has a presentation of rank one. Since each prime occurs with odd exponent, no primary summand of $H_1(M(K))$ is metabolic, and thus $(H_1(M(K)),\lk)$ has a presentation of rank one.  But that presentation matrix must be of the form $(\pm n)$, so the linking form on $H_1(M(K))$ is given by $ {\pm 1}/{n}$.  That is, some generator of $H_1(M(K))$ has self-linking $\pm 1 /n$, as desired.
\end{proof}

For a nonsingular bilinear form on $\zz_p^n$, the discriminant is, by definition, given by  $(-1)^{n(n-1)/2} \det(Q) \in  \zz_p^*/(\zz_p^*)^2$, where $Q$ is an $n \times n$ matrix with entries in $\zz_p$ representing the form.  Details can be found in the appendix.

\begin{theorem}\label{theoremklein} Suppose that $H_1(M(K)) = \zz_p \oplus \zz_p$ where $p$ is   prime.  Then if $K$ bounds a punctured Klein bottle  in $B^4$, the discriminant of the linking form is $\pm 1 \in \ff_p^* / (\ff_p^*)^2$.

\end{theorem}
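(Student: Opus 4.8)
The plan is to specialize the Murakami--Yasuhara splitting (Theorem~\ref{mythm}) to a punctured Klein bottle and then run a short case analysis dictated by the requirement that one summand be metabolic. First I would record that a punctured Klein bottle $F$ has $\beta_1(F) = 2$, so by Lemma~\ref{bettilemma} the branched cover $W(F)$ satisfies $\beta_2(W(F)) = 2$. Running the argument of Theorem~\ref{mythm} (i.e.\ Lemma~\ref{githm}) on this particular $W = W(F)$ rather than on a minimal-genus surface, I obtain an orthogonal splitting $(H_1(M(K)), \lk) = (G_1,\beta_1)\oplus(G_2,\beta_2)$ in which $(G_2,\beta_2)$ is metabolic and $(G_1,\beta_1)$ is presented by the $2\times 2$ integer intersection matrix $A$ of $W(F)$.

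Next I would classify the admissible splittings. Viewing $H_1(M(K)) = \zz_p\oplus\zz_p$ as a two-dimensional $\ff_p$-vector space, $G_1$ and $G_2$ are complementary subspaces, and since $\lk$ is nondegenerate its restriction to $G_2$ is nondegenerate as well. A nondegenerate metabolic form has underlying group of square order, which over $\ff_p$ forces $\dim_{\ff_p}G_2 \in \{0,2\}$; in particular the mixed possibility $G_1\cong G_2\cong\zz_p$ is excluded. Thus only two cases survive: either $G_2 = H_1(M(K))$ is itself metabolic, or $G_2 = 0$ and $G_1 = H_1(M(K))$.

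In the metabolic case I would choose a metabolizer so that the associated $\ff_p$-valued form has matrix $\begin{pmatrix}0 & c\\ c & d\end{pmatrix}$ with $c\neq 0$; its determinant is $-c^2$, and the sign-corrected discriminant $(-1)^{2\cdot1/2}\det = c^2$ is a square, hence $+1$. In the remaining case, the condition $\zz^2/A\zz^2\cong\zz_p\oplus\zz_p$ forces the Smith normal form of $A$ to be $\mathrm{diag}(p,p)$, so $A = pB$ with $B\in GL_2(\zz)$. Using the description in Appendix~\ref{alglK} that $\lk$ is represented by $A^{-1} = \tfrac1p B^{-1}\pmod{\zz}$, the associated $\ff_p$-valued form has matrix $B^{-1}\bmod p$, whose determinant is $(\det B)^{-1} = \pm 1$; its discriminant is therefore $\mp 1$. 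In both cases the discriminant lies in $\{+1,-1\}\subset \ff_p^*/(\ff_p^*)^2$, which is the assertion.

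The step requiring the most care is the bookkeeping that converts the \emph{integer} intersection form of $W(F)$ into the \emph{$\ff_p$-valued} linking form: I must check that factoring out $p$ yields the integral inverse $B^{-1}$ and that its mod-$p$ reduction genuinely represents $\lk$ in an $\ff_p$-basis, so that the determinant comes out as a unit $\pm 1$ rather than an arbitrary nonsquare. The other essential (if elementary) point is the fact that a nondegenerate metabolic form has square order, since this is exactly what rules out the mixed splitting and leaves only the two computable cases.
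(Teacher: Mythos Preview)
Your proposal is correct and follows essentially the same route as the paper: apply the Murakami--Yasuhara/Gilmer splitting with $\beta_2(W(F))=2$, rule out the $G_2\cong\zz_p$ case (the paper phrases this as ``no nonsingular form on $\zz_p$ is metabolic,'' you phrase it via square order), handle the metabolic case (discriminant $1$), and in the remaining case factor $A=pB$ with $\det B=\pm1$ and read off the discriminant from $B^{-1}\bmod p$. The only cosmetic discrepancy is that Lemma~\ref{githm} presents \emph{minus} the linking form, but in rank~$2$ this does not change the discriminant, and the paper glosses over the same sign.
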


\begin{proof} Consider a possible splitting $(H_1(M), \lk) = (G_1, \beta_1) \oplus (G_2, \beta_2)$.  No nonsingular form on $\zz_p$ is metabolic, so either $H_1(M) = G_2$ (in which case we immediately conclude that the discriminant of the linking form is 1), or $H_1(M) = G_1$.

In the second case, $H_1(M) = G_1$, $H_1(M)$ is presented by a $2 \times 2$ symmetric matrix.  Since the matrix presents $\zz_p \oplus \zz_p$, each entry is divisible by $p$, so the matrix can be written as 
 \[
\left(
\begin{array}{cc}
  pa&   pb   \\
 pb &  pc   \\
\end{array}
\right)
\]
 where $ac-b^2 = \pm 1$.
The linking form is then represented by the inverse of this matrix.  Viewing the linking form as taking values in $\zz_p$, the linking form is represented by the matrix
\[
\pm\left(
\begin{array}{cc}
  c&   -b   \\
  -b &  a   \\
\end{array}
\right).
\]
For a $2 \times 2$ matrix, the discriminant is given by the negative of the determinant.  This completes the proof.
\end{proof}

\begin{proof}[Proof of Theorem A] Consider the knot $4_1 \cs 5_1$, which can be written as the connect sum of 2--bridge knots $K_{5/2} \cs K_{5/1}$.  The linking form, when viewed as a  bilinear form on a $\zz_5$ vector space is given by a diagonal matrix with diagonal $[{-2},{-1}]$, and so has discriminant $-2 \in  \ff_5^* / (\ff_5^*)^2$.  Since $2$ is not a square, or minus a square, in $\zz_5$, we conclude that this knot cannot bound a Klein bottle in $B^4$.  
 As $4_1$ has genus one, and $5_1$ bounds a Mobius band in $S^3$, we see that $h(4_1 \cs 5_1)=3$.
\end{proof}

\section{Yasuhara's formula}\label{yasuharasection}
 The goal of this section is to present a proof of Yasuhara's formula, as stated in the next theorem.
 Let $\sigma(K)$ denotes the Murasugi signature of $K$   and let {$\Arf(K)$} denote the $\zz_2$--valued Arf invariant of $K$;  4$\Arf(K)$ is defined via the natural inclusion of $\zz_2$ into $\zz_8$. 

\begin{theorem}(Yasuhara)\label{ythm}
If a knot $K \subset S^3$ bounds a  smoothly embedded Mobius band in $B^4$, then $\sigma(K) + 4 \Arf(K)  \equiv  0 \text{ or } \pm 2 \mod 8$. \end{theorem}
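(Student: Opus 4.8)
The plan is to realize $K$ as the branch locus of a closed four-manifold and extract the congruence from the Guillou--Marin theorem, using Novikov additivity of the signature to isolate $\sigma(K)$. Let $\Sigma\subset S^3$ be a Seifert surface for $K$ and let $B\subset B^4$ be the given Mobius band. Writing $S^4=B^4_+\cup_{S^3}B^4_-$, push $\Sigma$ into $B^4_+$ and place $B$ in $B^4_-$, so that $\hat F=\Sigma\cup_K B$ is a closed nonorientable surface in $S^4$. Let $X$ be the two-fold cover of $S^4$ branched over $\hat F$, with branch locus $\tilde F\cong\hat F$; then $X$ is closed and oriented, and $\tilde F$ is characteristic in $X$.

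First I would pin down the signature of $X$. By Novikov additivity, $\sigma(X)=\sigma(W(\Sigma))-\sigma(W(B))$, where $W(\Sigma)$ and $W(B)$ are the two-fold branched covers of the two balls. The intersection form of $W(\Sigma)$ is the symmetrized Seifert form, so $\sigma(W(\Sigma))=\sigma(K)$. By Lemma~\ref{bettilemma}, $\beta_2(W(B))=\beta_1(B)=1$, so its intersection form has rank one and $\sigma(W(B))\in\{-1,0,1\}$. Hence
\[
\sigma(X)=\sigma(K)-\epsilon,\qquad \epsilon\in\{-1,0,1\}.
\]

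Next I would compute $\sigma(X)$ modulo $8$ from Guillou--Marin. Applied to $(X,\tilde F)$ the formula gives $\sigma(X)\equiv \tilde F\cdot\tilde F+2\operatorname{Br}(\tilde F)\pmod{16}$, where $\operatorname{Br}$ denotes the Brown invariant of the $\zz_4$--valued quadratic refinement on $H_1(\tilde F;\zz_2)$ and $\tilde F\cdot\tilde F=\tfrac12 e(\hat F)$ with $e$ the normal Euler number. Applied to $(S^4,\hat F)$, where every surface is characteristic, it controls the Euler number via $e(\hat F)\equiv -2\operatorname{Br}(\hat F)\pmod{16}$. The heart of the matter is the Brown-invariant computation: on the orientable summand $H_1(\Sigma;\zz_2)$ the quadratic form is twice the mod-two Seifert form, contributing $4\Arf(K)$, while the core of the Mobius band contributes an odd class, an error of $\pm 1$. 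Carrying this through, the two Guillou--Marin relations combine to reduce the right-hand side modulo $8$ to $4\Arf(K)\pm 1$, so that $\sigma(X)+4\Arf(K)\equiv\pm 1\pmod 8$ up to the normal-Euler bookkeeping.

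Combining the two computations, $\sigma(K)+4\Arf(K)\equiv \epsilon\pm 1\pmod 8$ with $\epsilon\in\{-1,0,1\}$, so a priori $\sigma(K)+4\Arf(K)$ lies in $\{0,\pm1,\pm2\}\pmod 8$. The final step is a parity observation: $\sigma(K)$ is even for every knot and $4\Arf(K)$ is even, so $\sigma(K)+4\Arf(K)$ is even; this eliminates the odd residues and leaves exactly $0$ and $\pm 2$, as claimed. The main obstacle is the middle step---correctly evaluating the Brown invariants of $\hat F$ and $\tilde F$ while tracking both the normal Euler number of the Mobius band and the odd contribution of its core, since this is precisely what converts the geometric data into the arithmetic $4\Arf(K)\pm 1$.
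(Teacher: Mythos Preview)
Your strategy is close to the paper's, but the detour through Guillou--Marin for the pair $(X,\tilde F)$ is both unnecessary and unjustified as written. You have not checked that $H_1(X)=0$ (the gluing $W(\Sigma)\cup_{M(K)}(-W(B))$ only gives $H_1(X,\qq)=0$ in general), nor that the branch set $\tilde F$ is characteristic in $X$; and even granting those, you would still have to compute the Brown invariant of the Guillou--Marin form on $\tilde F\subset X$, which is \emph{not} the same form as on $\hat F\subset S^4$. Your Brown-invariant computation treats only the latter.

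The paper circumvents this by replacing your $(X,\tilde F)$ application with the $G$--signature theorem (packaged as Theorem~\ref{glthm}), which gives the exact identity $\sigma(X)=-\tilde F\cdot\tilde F=-\tfrac12\,\hat F\cdot\hat F$ with no hypotheses on $H_1(X)$ or characteristic classes. Combined with Guillou--Marin on $(S^4,\hat F)$---where every surface is characteristic and your decomposition $\operatorname{Br}(\hat F)=4\Arf(K)\pm 1$ is correct---one gets $\sigma(X)\equiv \operatorname{Br}(\hat F)\pmod 8$. Together with $\sigma(X)=\sigma(K)-\epsilon$ (and note $\epsilon=\pm 1$, never $0$, since $\partial W(B)$ is a rational homology sphere so the rank-one form is nondegenerate) this yields $\sigma(K)+4\Arf(K)\equiv \epsilon+\delta\in\{-2,0,2\}\pmod 8$ directly, without the final parity step. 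The paper organizes the same ingredients as two standalone identities, $\sigma(K)=\Sign(W(F))+\tfrac12 F\cdot F$ and $4\Arf(K)\equiv\beta(B^4,F)+\tfrac12 F\cdot F\pmod 8$, valid for any $F$; subtracting and specializing to a M\"obius band gives the result in one line.
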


The proof follows quickly from two results. The first (Theorem \ref{glthm})  was proved by Gordon-Litherland~\cite{gl} extending earlier work of Kauffman-Taylor~\cite{kt} and Viro~\cite{viro} that gave a 4--dimensional interpretation of the Murasugi knot signature~\cite{murasugi}. 
 The second (Theorem \ref{gmthm}),
 suggested by ~\cite{gi4,gi5}, 
is a consequence of the Guillou-Marin~\cite{gm} generalization of Rokhlin's theorem \cite{rh1} (see also Rokhlin~\cite{rh2}, Freedman-Kirby~\cite{fk}, Matsumoto~\cite{mats}, and Kervaire-Milnor~\cite{km}).    Details of the definitions and properties of the terms that appear in each  will be provided following the proof of Theorem~\ref{ythm}. 
 
 In summary, let $K$ be a knot bounding a smooth connected  surface $F \subset B^4$.  Let $\sigma(K)$ and $\Arf(K)$ denote the signature and Arf invariant, taking values in $\zz$ and $\zz_2$, respectively.  Let $W(F)$ be the 2--fold branched cover of $B^4$ branched over $F$, having signature $\Sign(W(F))$.  Let $F \cdot F$ be the self-intersection number of $F$ 
 which is always even.  Let $\beta(B^4, F)$ denote the Brown invariant~\cite{br}, $\beta$, of the Guillou-Marin form $q_F:H_1(F,\zz_2) \rightarrow \zz_4.$

\begin{theorem}\label{glthm} $\sigma(K) = \Sign({W(F)}) +  \frac{1}{2}F\cdot F $.
\end{theorem}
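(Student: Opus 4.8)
The plan is to show that the quantity $\Sign(W(F)) + \frac12\, F\cdot F$ depends only on the knot $K$, and not on the chosen spanning surface $F$, and then to evaluate it on a particularly convenient surface. This is the 4--dimensional interpretation of the Murasugi signature due to Kauffman-Taylor and Viro, and I would follow that strategy using the tools already in play in this paper.

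First I would prove the independence of the surface. Given two connected surfaces $F_0, F_1 \subset B^4$ with $\partial F_0 = \partial F_1 = K$, glue them along $K$ to form a closed surface $\hat F = F_0 \cup_K (-F_1)$ inside the closed manifold $S^4 = B^4 \cup_{S^3}(-B^4)$. The associated 2--fold branched cover decomposes as $W(\hat F) = W(F_0)\cup_{M(K)}(-W(F_1))$. Because $M(K)$ is a rational homology sphere, $H_1(M(K);\rr)=0$, so there is no Wall non--additivity correction and Novikov additivity gives $\Sign(W(\hat F)) = \Sign(W(F_0)) - \Sign(W(F_1))$. On the other hand, since $\hat F$ is a closed surface in the closed 4--manifold $S^4$, the $G$--signature theorem applied to the deck transformation of $W(\hat F)$ yields $\Sign(W(\hat F)) = 2\Sign(S^4) - \frac12\,\hat F\cdot\hat F = -\frac12\,\hat F\cdot\hat F$, where $\hat F\cdot\hat F$ is the normal Euler number of $\hat F$. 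Since the framings match along $K$ and the orientation reversal on the second copy of $B^4$ negates the second self-intersection, $\hat F\cdot\hat F = F_0\cdot F_0 - F_1\cdot F_1$. Equating the two expressions and rearranging gives precisely $\Sign(W(F_0)) + \frac12\,F_0\cdot F_0 = \Sign(W(F_1)) + \frac12\,F_1\cdot F_1$, which is the desired independence.

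It then remains to evaluate the invariant on one surface. I would take $\Sigma$ to be a Seifert surface for $K$ pushed into the interior of $B^4$. For this choice the normal framing is the Seifert ($0$--)framing, so $\Sigma\cdot\Sigma = 0$, and $H_2(W(\Sigma))$ is generated by lifts of a basis of $H_1(\Sigma)$, on which the intersection form is represented by the symmetrized Seifert matrix $V + V^T$. Hence $\Sign(W(\Sigma)) = \sigma(V+V^T) = \sigma(K)$, so that $\Sign(W(\Sigma)) + \frac12\,\Sigma\cdot\Sigma = \sigma(K)$; combining this with the independence statement completes the proof.

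I expect the main obstacle to be the careful application of the $G$--signature theorem and the bookkeeping of self-intersection numbers when $\hat F$ (and hence $F$) may be nonorientable. For a nonorientable surface $[\hat F]$ is not an integral homology class, so the self-intersection must be read as the normal Euler number; verifying that the $G$--signature formula holds with this interpretation, that $\hat F\cdot\hat F$ is even, and that the framings along $K$ are compatible so the Euler numbers add correctly, is the delicate point. The vanishing of the Wall correction term in Novikov additivity must also be justified from $H_1(M(K);\rr)=0$.
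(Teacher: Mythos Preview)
Your proposal is correct and follows essentially the same route as the paper: the paper likewise uses Novikov additivity of signatures together with the $G$--signature theorem (applied to the closed double $W(F_0)\cup_{M(K)}(-W(F_1))$ over $S^4$) to show that $\Sign(W(F))+\tfrac12 F\cdot F$ is independent of $F$, and then evaluates on a pushed-in Seifert surface to identify the constant as $\sigma(K)$. The only cosmetic difference is that the paper phrases the $G$--signature formula in terms of the self-intersection $\widetilde F\cdot\widetilde F$ of the fixed set in the cover and then observes $\widetilde F\cdot\widetilde F=\tfrac12 F\cdot F$, whereas you fold this identification directly into the formula.
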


\begin{theorem}\label{gmthm}  If $F$ is smooth, then $4\Arf(K)   \equiv    \beta(B^4 ,F)+  \frac{1}{2} F \cdot F  \mod 8$.  
\label{arf}
\end{theorem}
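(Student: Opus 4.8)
The plan is to deduce the congruence from the closed-manifold Guillou--Marin theorem, which asserts that for a closed characteristic surface $\hat F$ in a closed oriented $4$--manifold $X$ one has $\Sign(X) \equiv [\hat F]\cdot[\hat F] + 2\beta(q_{\hat F}) \pmod{16}$. The idea is to build such a closed pair by capping $F$ off with a Seifert surface and then separate the contribution of the cap, which will carry the Arf invariant.

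First I would choose a Seifert surface $\Sigma \subset S^3$ for $K$ and form $X = B^4 \cup_{S^3} \overline{B^4} \cong S^4$, placing $F$ in the first ball and a slightly pushed-in copy of $\Sigma$ in the second, so that $\hat F = F \cup_K \Sigma$ is a closed surface in $S^4$. Since $H_2(S^4)=0$ we have $[\hat F]=0$, the characteristic hypothesis is automatic, and Guillou--Marin gives $0 = \Sign(S^4) \equiv e(\hat F) + 2\beta(q_{\hat F}) \pmod{16}$, where $e(\hat F)=[\hat F]\cdot[\hat F]$ is the normal Euler number. Using additivity of the relative normal Euler number under gluing along $K$ with the Seifert ($0$--)framing, together with the vanishing of the self-intersection of an orientable pushed-in Seifert surface, I expect $e(\hat F) = F\cdot F$ (the same relative self-intersection appearing in Theorem~\ref{glthm}).

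The next step is to split $\beta(q_{\hat F})$. Because $[\partial F]=0$ in $H_1(F;\zz_2)$ and $[\partial\Sigma]=0$ in $H_1(\Sigma;\zz_2)$, the Mayer--Vietoris sequence for $\hat F = F\cup_K\Sigma$ identifies $H_1(\hat F;\zz_2) \cong H_1(F;\zz_2)\oplus H_1(\Sigma;\zz_2)$. I would then check that $q_{\hat F}$ restricts to $q_F$ on the first summand and to the pushed-in Seifert form $q_\Sigma$ on the second, with vanishing cross terms since the two surfaces and their spanning membranes lie in disjoint half-balls; this yields $\beta(q_{\hat F}) = \beta(B^4,F) + \beta(q_\Sigma)$. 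Finally, because $\Sigma$ is orientable, $q_\Sigma$ takes values in $\{0,2\}\subset\zz_4$ and equals twice the mod-$2$ Seifert quadratic form whose Arf invariant is $\Arf(K)$; a Gauss-sum computation gives $\beta(q_\Sigma) \equiv 4\Arf(K) \pmod 8$. Substituting and dividing the $\pmod{16}$ congruence by $2$ (legitimate since $F\cdot F$ is even) produces $0 \equiv \tfrac12 F\cdot F + \beta(B^4,F) + 4\Arf(K) \pmod 8$, which is the asserted formula: the overall sign is immaterial because $4\Arf(K)\in\{0,4\}$ satisfies $4\Arf(K)\equiv -4\Arf(K)\pmod 8$, forcing the remaining terms into $\{0,4\}$ as well.

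The main obstacle I anticipate is the Brown-invariant bookkeeping in the splitting step: confirming that the Guillou--Marin form genuinely decomposes along the Mayer--Vietoris decomposition with no boundary correction coming from $K$, and matching the orientation and framing conventions so that the residual cap term is exactly $4\Arf(K)$ and the self-intersection term is exactly $\tfrac12 F\cdot F$ rather than shifted versions. Pinning down these normalizations — in particular the precise definitions of $q_F$ and of the relative self-intersection $F\cdot F$ consistent with Theorem~\ref{glthm} — is where the care lies; once fixed, the identification $\beta(q_\Sigma)=4\Arf(K)$ is just the classical relation between the Brown invariant of a doubled $\zz_2$--quadratic form and its Arf invariant.
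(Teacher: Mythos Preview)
Your proposal is correct and is essentially the paper's argument. The paper phrases it as first showing that $A(K)=\beta(B^4,F)+\tfrac12 F\cdot F$ is independent of $F$ (via the Guillou--Marin theorem applied to the closed surface obtained by gluing two choices of $F$ across $S^3$) and then evaluating on a pushed-in Seifert surface, where $F\cdot F=0$ and $q_F=2\frak q_F$ gives $\beta(q_F)=4\Arf(K)$; your single capping-with-$\Sigma$ argument is exactly this well-definedness step specialized to the pair $(F,\Sigma)$, with the same Brown-invariant splitting and the same identification $\beta(q_\Sigma)=4\Arf(K)$.
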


Subtracting these equations and using $-4\Arf(K) =  4\Arf(K) \mod 8$, we obtain:
\begin{corollary}\label{Fcor} $\sigma(K) + 4\Arf(K) = \Sign({W(F)})-\beta(B^4 ,F)  \mod 8$.
\end{corollary}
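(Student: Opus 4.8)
The plan is to obtain the corollary purely as an arithmetic consequence of Theorems~\ref{glthm} and~\ref{gmthm}, since all of the genuine geometric content already resides in those two statements. First I would subtract the congruence of Theorem~\ref{gmthm} from the equality of Theorem~\ref{glthm}. Writing the two side by side,
\begin{align*}
\sigma(K) &= \Sign(W(F)) + \tfrac{1}{2} F\cdot F, \\
4\Arf(K) &\equiv \beta(B^4,F) + \tfrac{1}{2} F\cdot F \pmod 8,
\end{align*}
one sees that the term $\tfrac{1}{2}F\cdot F$ occurs on both right-hand sides and so cancels upon subtraction, leaving
\[
\sigma(K) - 4\Arf(K) \equiv \Sign(W(F)) - \beta(B^4,F) \pmod 8.
\]

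The only remaining point is to convert the minus sign in front of $4\Arf(K)$ into a plus sign. Because $\Arf(K)\in\{0,1\}$, the quantity $4\Arf(K)$ is either $0$ or $4$; in either case $-4\Arf(K)$ agrees with $4\Arf(K)$ modulo $8$, the nontrivial instance being $-4\equiv 4 \pmod 8$. Substituting this observation into the displayed congruence gives precisely
\[
\sigma(K) + 4\Arf(K) \equiv \Sign(W(F)) - \beta(B^4,F) \pmod 8,
\]
which is the assertion of the corollary.

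There is essentially no substantive obstacle here: the statement is a formal manipulation modulo $8$, and the only care required is bookkeeping—confirming that both theorems are applied to the same spanning surface $F$ so that their half-self-intersection terms are genuinely identical and cancel cleanly, and that $\Arf(K)$ is being read as a $\zz_2$--valued quantity embedded in $\zz_8$ via $4\Arf(K)$. The real work has already been done in establishing the Gordon--Litherland formula (Theorem~\ref{glthm}) and the Guillou--Marin consequence (Theorem~\ref{gmthm}); this corollary merely packages them into the single mod-$8$ relation that I expect to serve as the workhorse for the definiteness obstructions developed in the later sections.
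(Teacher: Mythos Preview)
Your proposal is correct and follows exactly the same route as the paper: subtract Theorem~\ref{gmthm} from Theorem~\ref{glthm} to cancel the $\tfrac{1}{2}F\cdot F$ term, then use $-4\Arf(K)\equiv 4\Arf(K)\pmod 8$. The paper compresses this into a single sentence, but the content is identical.
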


 \begin{proof}[Proof of Theorem \ref{ythm}]   
 Now assuming $F$ is a Mobius band, by Lemma~\ref{bettilemma}  we have that $H_2(W(F),\qq) = \qq$.    It follows that $\Sign(W(F)) = \pm1$.  It follows from the definition of the Brown invariant for a quadratic form $q_F$, that  if the form is one dimensional and odd (as it must be for a Mobius band), then $\beta(B^4 ,F) =\beta(q_F)  \equiv  \pm 1\mod 8$. The result now follows from Corollary \ref{Fcor}.
\end{proof}
  
\subsection{Background and proof summary for Theorem~\ref{glthm}}$  $ 

With regards to the Gordon-Litherland theorem, we leave most of the details to~\cite{gl}.  The core result is a special case of the $G$--signature theorem~\cite{as}, which holds in the topological category:   If $\tau$ is an orientation preserving involution of an oriented closed 4--manifold $W$ with fixed set a surface $\widetilde{F}$, then  
$ 2\Sign(\overline{W}) -\Sign(W)  = \widetilde{F} \cdot \widetilde{F}$, 
where $\overline{W}$  is the quotient of $W$ under the action of $\tau$.  The self-intersection of the possibly nonorientable surface $\widetilde{F}$ is defined by counting the self-intersection points of a transverse push-off of $\widetilde{F}$ with itself, using a local orientation on $\widetilde{F}$ to give a well-defined sign to each intersection point.  
 For a topological proof  of the $G$--signature theorem in this setting, see~\cite{gi2, gordon}.

 In the case that ${F}$ is a possibly nonorientable connected  surface in the $4$--ball with  nontrivial boundary in $S^3$, the self-intersection is defined by counting intersections with a push off $\hat F$ of $F$ as above, except one must pick $\hat{F}$ so that  $\partial F$ and $\partial \hat{F}$ have linking number zero. 
Now, $F$ also has a disjoint push-off, say $F'$, and  $F \cdot F$ can then be seen to be
  {\em minus} the linking number of $\partial F$ and $\partial {F'}$ coherently oriented. One has that $F \cdot F$ is even. This follows from the fact that  every closed surface in $S^4$ has even self-intersection.   (Modulo two,  $F\cdot F $ represents the self-intersection of $[F]$, where $[F]$ is the class in $H_2(S^4, \zz_2)$ represented by $F$. But this is clearly trivial, since $H_2(S^4,\zz_2) = 0$.)
 
The proof of Theorem~\ref{glthm} proceeds as follows.  From the additivity of signature and the $G$--signature theorem, the quantity   
 $\Sign({W}) + \widetilde{F}\cdot \widetilde{F}  $ is independent of the choice of $F$.  Next it is observed that $ \widetilde{F}\cdot \widetilde{F} = \frac{1}{2}F\cdot F$.  Finally, if $F$ is a Seifert surface for $K$ pushed into the 4--ball, the value of
  $\Sign({W(F)}) + \frac{1}{2} {F}\cdot{F}  $ is the Murasugi signature $\sigma(K)$.

\subsection{Background and proof summary for Theorem~\ref{gmthm}}$  $

\subsubsection {\bf The Arf invariant} One may use the Seifert form of a knot with Seifert surface $F \subset S^3$,  $V\co H_1(F) \times H_1(F) \to \zz$, to  define a quadratic form   $\frak{q}_F\co H_1(F,\zz_2)   \to \zz_2$, by $\frak{q}_F(x)= V(\hat x,\hat x) \mod 2$ where $\hat x \in H_1(F)$ reduces to  $x \in H_1(F, \zz_2)$. The form is quadratic with respect to the intersection form of $F$:  $\frak{q}_F(x+y) = \frak{q}_F(x) + \frak{q}_F(y) + \left<x, y\right>$. The Arf invariant of $K$, denoted $\Arf(K)$, equals either $0$ or $1 \mod 2$, depending on whether this form takes value 0 or 1 on a majority of the elements, resepectively.

\vskip.05in

\subsubsection {\bf The Brown invariant} Let $V$ be a finite dimensional $\zz_2$ vector space with nonsingular inner product $\left< \cdot,\cdot \right>\co V \times V \to \zz_2$.  A function $q\co V \to \zz_4$ is called a quadratic form with respect to $\left< \cdot , \cdot \right>$  if $q(x+y) = q(x) + q(y) + 2\left<x,y\right>$ for all $x, y \in V$. 
The $2$ in $2\left<x,y\right>$   denotes the injective map  from $\zz_2$ to $\zz_4$.
   To such a form there is a well-defined $\zz_8$--valued invariant $\beta(q)$, defined by the formula $\beta(q) = \frac{1}{ {2}^{n/2}}\sum_{x \in V} i^{ q(x)} $.  (Here $i = \sqrt{-1}$.)  This sum can be shown to be an eighth root of unity; we identify the multiplicative group of eighth roots of unity with the additive group   $\zz_8$.   We will use the fact that $\beta( q_1 \oplus q_2) = \beta( q_1)  +\beta(  q_2)$.  
   
   An alternative viewpoint of the Brown invariant that applies in the singular setting is nicely presented in~\cite{kirbymelvin}. \vskip.1in

\subsubsection{\bf The Guillou-Marin form, $q_F$.}

Let $F$ be a characteristic embedded closed surface  in a 4--manifold $W$ where $H_1(W) = 0$.  Characteristic means $F\cdot G  \equiv  G\cdot G \mod 2$ for all surfaces $G$ embedded in $W$, where $F\cdot G$ represents the intersection number of $F$ and $G$, computed by isotoping $G$ to be transverse to $F$ and counting the number of points of intersection.  
  In~\cite{gm}, (see also~\cite{mats}),  Guillou-Marin   define for such a surface a quadratic form $q_F\co H_1(F,\zz_2) \to \zz_4$  which is quadratic with respect to the intersection pairing on  $H_1(F,\zz_2)$.  Here is a summary of the definition of $q_F$.   This definition is not used in our proofs.

 For a class $z \in H_1(F,\zz_2)$, let $\alpha$ be an {\it oriented} embedded curve  on $F$ which represents 
 $z$. The normal bundle to $\alpha$ in $W$, denoted $N_\alpha$, is a trivial oriented 3--dimensional vector bundle over $\alpha$, i.e. $T(W)=T(\alpha) \oplus N_\alpha$ as oriented bundles.  Let $\alpha$ bound a connected oriented surface $G$ embedded in $W$,  transverse to $F$  along $\alpha$ and in its interior.

 A nontrivial section to $N_\alpha$ is given by the inward 
 pointing normal to $\alpha$ in $G$.  This splits $N_\alpha$ as $\epsilon \oplus N'_\alpha$, where $N'_\alpha$ is a trivial oriented 2--dimensional vector bundle over $\alpha$.  Note that the normal bundle to $\alpha$ in $F$ can be viewed as a subbundle of $N'_\alpha$.  
  
 The normal bundle to $G$ in $W$ has a nonvanishing section  which restricts to give a nonvanishing section of $N'_\alpha$. This section of $N'_\alpha$ is unique up to homotopy.  Since $N'_\alpha$ is trivial and oriented, this section determines a trivialization of $N'_\alpha$.  The twisting number $t(\alpha, F, G)$ is the number of half-twists $F$ makes with respect to this trivialization.
  
 The Guillou-Marin form is defined by $q_F(z) = t(\alpha,F,G) + 2 G\cdot F + 2 z \cdot z \mod 4$.
 
\subsubsection {\bf The Brown invariant of a pair $(W,F)$.}  $  $

 Let $W$ be a closed 4--manifold with $H_1(W) = 0$.  Let $F$ be a closed surface embedded in $W$ which is characteristic.  The  Brown invariant of $q_F$ is denoted by  by $\beta(W,F)$.

\begin{theorem}[Guillou-Marin]\label{guilloumarinthm}
If $W$ is a closed 4--manifold with $H_1(W) = 0$ and $F\subset W$ is a characteristic surface, then $$\Sign(W) \equiv  2 \beta(W,F) +
F\cdot F  \mod 16.$$
\end{theorem}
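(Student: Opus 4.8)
The plan is to deduce the congruence from Rokhlin's theorem---that a closed smooth spin $4$--manifold has signature divisible by $16$---by excising a neighborhood of $F$, capping off the resulting boundary by a spin manifold, and bookkeeping the contributions of the pieces to the signature. First I would set $N = \nu(F)$, a tubular neighborhood of $F$ in $W$, and $W_0 = W \setminus \operatorname{int} N$. Since $F$ is characteristic, its $\zz_2$--class is dual to $w_2(W)$, so $w_2$ restricts to zero on $W_0$ and $W_0$ admits a spin structure $\spin$; because $H_1(W) = 0$ there is no serious ambiguity to contend with. The boundary $\partial W_0 = \partial N$ is the total space of the circle bundle of the normal bundle of $F$, and $\spin$ restricts to a spin structure on this $3$--manifold.

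Next I would manufacture a closed spin $4$--manifold by finding a compact spin $P$ with $\partial P = -\partial W_0$ whose spin structure matches $\spin$ on the boundary, and forming $X = W_0 \cup_\partial P$. Rokhlin's theorem then gives $\Sign(X) \equiv 0 \bmod 16$. By Novikov additivity, applied to the decompositions $W = W_0 \cup_\partial N$ and $X = W_0 \cup_\partial P$,
\[
\Sign(W) = \Sign(W_0) + \Sign(N), \qquad \Sign(X) = \Sign(W_0) + \Sign(P),
\]
so that
\[
\Sign(W) \equiv \Sign(N) - \Sign(P) \bmod 16 .
\]
It remains to identify the right-hand side with $2\beta(W,F) + F\cdot F$.

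The crux is showing $\Sign(N) - \Sign(P) \equiv 2\beta(W,F) + F\cdot F \bmod 16$, and the genuinely hard part is extracting the $2\beta$ term from the spin-matching condition. This is where the form $q_F$ enters: the spin structure $\spin$ on the circle bundle $\partial N$ is recorded, curve by curve, by the framings of embedded curves $\alpha \subset F$, and these framings are exactly the twisting numbers $t(\alpha,F,G)$ appearing in $q_F(z) = t(\alpha,F,G) + 2\,G\cdot F + 2\,z\cdot z \bmod 4$. Concretely, I would build $P$ by surgering the elements of a symplectic basis of $H_1(F;\zz_2)$ with respect to the intersection pairing, each surgery framing being forced even or odd according to the value of $q_F$ on that basis vector, and contributing a corresponding $\pm 1$ to $\Sign(P)$. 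Summing these contributions and comparing with the Gauss-sum formula $\beta(q_F) = 2^{-n/2}\sum_{x} i^{q_F(x)}$ should identify the net correction with $2\beta(W,F)$, the residual disk-bundle and normal-Euler-number terms assembling into $F\cdot F$. The main obstacle is precisely this last identification: one must verify that the spin-boundary condition pins down each surgery framing through $q_F$, and that the individual signatures reassemble into the full Gauss sum rather than merely its real part; care is needed with the nonorientable normal bundle of $F$, and the additivity $\beta(q_1 \oplus q_2) = \beta(q_1) + \beta(q_2)$ recorded above is what lets one reduce to one basis curve at a time.

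As an alternative I would keep in reserve a move-based argument: reduce $(W,F)$ to the empty characteristic surface---where the assertion is just Rokhlin's theorem---by a sequence of blow-ups $W \rightsquigarrow W \mathbin{\#} \overline{\cc P}^2$ and tubing/finger moves on $F$, and check that each move alters $\Sign(W)$, $F\cdot F$, and $2\beta(W,F)$ consistently, so that the congruence propagates. This trades the delicate Gauss-sum computation for a finite list of local verifications, but requires knowing that such moves suffice to trivialize any characteristic pair.
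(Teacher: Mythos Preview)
The paper does not prove this theorem. It is stated with the attribution ``[Guillou--Marin]'' and cited from~\cite{gm}, with pointers also to~\cite{mats},~\cite{fk},~\cite{rh2}, and~\cite{km}; the paper then uses it as a black box to establish Theorem~\ref{gmthm} and Corollary~\ref{Fcor}. So there is no in-paper proof to compare your proposal against.

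For what it is worth, both strategies you outline are the ones actually used in the literature: the excise--cap--apply-Rokhlin argument is the shape of the original proof in~\cite{gm}, and the move-based reduction is essentially Matsumoto's treatment in~\cite{mats}. Your identification of the crux is accurate---tying the spin filling $P$ to the Gauss sum defining $\beta(q_F)$ is where all the content lies, and your sketch does not yet carry this out. One point to tighten: the remark that ``because $H_1(W)=0$ there is no serious ambiguity'' in the spin structure on $W_0$ is too quick, since $H^1(W_0;\zz_2)$ need not vanish when $F$ has nontrivial $H_1(F;\zz_2)$; sorting out how the choice of spin structure interacts with the definition of $q_F$ is part of the work in~\cite{gm} and~\cite{mats}.
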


\subsubsection{\bf  The Arf invariant and nonorientable surfaces}
Let $K$ be a knot in $S^3$ bounding a possibly nonorientable surface $F$ in $B^4$.  We can define a form  $q_F$ by the same procedure as used by Guillou-Marin for  closed surfaces. Let $\beta(B^4, F)$ be the Brown invariant of $q_F$. Then we can define an invariant by 
$A(K)  \equiv   \beta(B^4, F) + \frac{1}{2} F\cdot F   \mod 8$.
That this is well-defined, depending only on $K$, follows immediately from Theorem~\ref{guilloumarinthm}.
If $F$ is  an orientable Seifert surface for $K$, it can be pushed into $B^4$.  For that $F$, $F \cdot F = 0$. If we represent a class in $H_1(F,\zz_2)$ by a curve $c$, then 
$q_F(c) = 2 \frak{q}_F(c) \in \zz_4$. Thus, $i^{q_F(c)} = \pm 1$ depending on whether $\frak{q}_F(c)$ is zero or non-zero.  It follows that  $\beta(q_F)$ is positive or negative depending on whether a majority of elements have $\frak{q}_F(c)$   zero or non-zero.  Knowing that $\beta$ is a unit complex number tells us that $\beta(q_F)$ is $1$ or $-1$.  Switching to additive notation, $\beta(q_F) = 0$ or $4\mod8$ depending on whether $\frak{q}_F$ is zero or non-zero on a majority of elements of $H_1(F,\zz_2)$.  

We conclude from the previous paragraph that  
 the invariant $A(K)$ agrees with $\Arf(K)$ followed by  the inclusion of $\zz_2$ into  $\zz_{8}$. 
Thus, we have Theorem~\ref{arf}.
  
\section{Positive Definite forms} \label{sectionposdef}

\begin{theorem}\label{pdefthm} If $K$ bounds a punctured Klein bottle $F$ in $B^4$ and $W(F)$, the 2--fold branched cover of $B^4$ branched over $F$, has a positive definite intersection form, then $\sigma(K) + 4 \Arf(K)  \equiv  0, 2, \text{or } 4 \mod 8$.  If $W(F)$ is negative definite then $  \sigma(K) + 4 \Arf(K)  \equiv  0, 4, \text{or } 6 \mod 8$

\end{theorem}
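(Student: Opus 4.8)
The plan is to route everything through Corollary~\ref{Fcor}, which asserts $\sigma(K) + 4\Arf(K) \equiv \Sign(W(F)) - \beta(B^4, F) \bmod 8$, and then to determine the two terms on the right under the hypothesis that $F$ is a punctured Klein bottle with $W(F)$ definite. First I would observe that a punctured Klein bottle is a connected surface with connected boundary and Euler characteristic $-1$, whence $\beta_1(F) = 2$; Lemma~\ref{bettilemma} then gives $\beta_2(W(F)) = 2$. Thus in the positive definite case the intersection form has rank $2$ and $\Sign(W(F)) = 2$, while in the negative definite case $\Sign(W(F)) = -2$. The entire remaining problem is therefore to list the possible values of the Brown invariant $\beta(B^4, F) = \beta(q_F)$.

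The second step is to identify $q_F$ abstractly as a quadratic form. Here $H_1(F,\zz_2) \cong \zz_2 \oplus \zz_2$, and I would first verify that the $\zz_2$ intersection pairing on $H_1(F, \zz_2)$ is nonsingular: its radical is the image of $H_1(\partial F, \zz_2)$, and since $[\partial F]$ is null-homologous modulo $2$ (the boundary of the punctured Klein bottle represents an even class in $H_1(F)$), the radical is trivial. Taking $a$ and $b$ to be the cores of the two crosscaps gives $\langle a, a\rangle = \langle b, b\rangle = 1$ and $\langle a, b\rangle = 0$, so the pairing is the odd nonsingular rank-two form over $\zz_2$, not the hyperbolic one. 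This nonsingularity is exactly what is needed for $\beta(q_F)$ to be defined by the Gauss-sum formula.

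The third step is the computation. The quadratic relation $q_F(x+y) = q_F(x) + q_F(y) + 2\langle x, y\rangle$ applied to $q_F(2a) = q_F(0) = 0$ forces $q_F(a)$ to be odd, so $q_F(a), q_F(b) \in \{1, 3\} \subset \zz_4$, while $q_F(a+b) = q_F(a) + q_F(b)$. Evaluating $\beta(q_F) = \frac{1}{2}\sum_{x} i^{q_F(x)}$ on the four resulting forms yields $\beta(q_F) = 2$ when $q_F(a) = q_F(b) = 1$, $\beta(q_F) = 6$ when $q_F(a) = q_F(b) = 3$, and $\beta(q_F) = 0$ when $q_F(a) \ne q_F(b)$; in every case $\beta(B^4, F) \in \{0, 2, 6\} \subset \zz_8$. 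Substituting into Corollary~\ref{Fcor}, the positive definite case gives $\sigma(K) + 4\Arf(K) \equiv 2 - \beta(B^4, F) \in \{0, 2, 4\} \bmod 8$, and the negative definite case gives $\sigma(K) + 4\Arf(K) \equiv -2 - \beta(B^4, F) \in \{0, 4, 6\} \bmod 8$, which are precisely the asserted congruences.

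The main obstacle sits in the second and third steps: one must correctly pin down the intersection form on $H_1(F, \zz_2)$ as the odd form and confirm its nonsingularity, since the hyperbolic form would produce a different set of Brown invariants and hence a different, wrong conclusion. Everything else is routine Gauss-sum bookkeeping built on top of the Gordon-Litherland and Guillou-Marin inputs already packaged into Corollary~\ref{Fcor} via Theorems~\ref{glthm} and~\ref{gmthm}.
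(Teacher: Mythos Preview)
Your proof is correct and follows essentially the same approach as the paper's: use Lemma~\ref{bettilemma} to get $\Sign(W(F)) = \pm 2$, determine the possible Brown invariants of $q_F$ on the punctured Klein bottle, and substitute into Corollary~\ref{Fcor}. The paper simply asserts that the intersection form on $H_1(F,\zz_2)$ is diagonalizable and that the four resulting $\zz_4$--valued quadratic forms have Brown invariant $0$ or $\pm 2$, whereas you carry out these computations explicitly (and also handle the negative definite case directly rather than by the paper's appeal to mirror images).
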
 
\begin{proof}
 Consider the positive definite case.
Lemma~\ref{bettilemma} implies that $\beta_2(H_2(W(F)) = 2$, and hence,  $\Sign(W(F))=2.$ The intersection form on the Klein bottle is diagonalizable; using this, a quick computation shows that there are exactly  four  $\zz_4$--valued quadratic forms on $H_1(F, \zz_2)$. When one computes the  Brown invariant for each of these,  the result is  either $0$ or $\pm 2 \in \zz_8$.  Corollary \ref{Fcor} then yields the statement of the theorem in the positive definite case.
 
 The negative definite case can be handled similarly, or can be proved by taking mirror images.
\end{proof}

\section{Indefinite Forms}\label{indefinitesection}

\begin{theorem}\label{indefthm}Suppose that $H_1(M(K)) = \zz_p \oplus \zz_p \oplus \zz_q$ where $ q \equiv 1 \in \ff_p^* / (\ff_p^*)^2$.   
If $H_1(M(K))$ is the boundary of a 4--manifold $W$ with  second Betti number 2 which has  an indefinite intersection form, then the linking form restricted to  $\zz_p \oplus \zz_p \subset H_1(M(K))$ is metabolic.
\end{theorem}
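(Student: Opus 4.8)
The plan is to feed the hypotheses into the splitting theorem of Gilmer (Lemma~\ref{githm}) that already drove the Murakami--Yasuhara theorem, and then to read off the relevant square class from the intersection matrix of $W$. First I would write $(H_1(M(K)),\lk) = (G_1,\beta_1)\oplus(G_2,\beta_2)$ with $(G_2,\beta_2)$ metabolic and $(G_1,\beta_1)$ presented by the intersection form $A$ of $W$, a $2\times 2$ symmetric integer matrix since $\beta_2(W)=2$. Because $H_1(M(K))$ is finite, $A$ is nonsingular, so $|G_1| = |\det A|$; and because the form is indefinite of rank $2$ its signature is $0$, which forces $\det A < 0$. This sign is the crux of the whole argument. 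Since $(G_2,\beta_2)$ is metabolic, $|G_2|$ is a perfect square dividing $|H_1(M(K))| = p^2 q$, hence $|G_2|\in\{1,p^2\}$, giving two cases.

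In the case $|G_2| = p^2$, the metabolic summand $G_2$ has order $p^2$; as the linking form is orthogonal with respect to the primary decomposition and the $p$--primary part of $H_1(M(K))$ is exactly $\zz_p\oplus\zz_p$, any order--$p^2$ direct summand must be this entire $p$--part, so $G_2 = \zz_p\oplus\zz_p$ and $G_1 = \zz_q$. Then the restriction of $\lk$ to $\zz_p\oplus\zz_p$ is $(G_2,\beta_2)$ itself, which is metabolic, and there is nothing more to prove.

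In the remaining case $|G_2| = 1$ we have $G_1 = H_1(M(K))$ presented by $A$ with $\det A = -p^2 q$, and I would localize at $p$. Since the $p$--primary part is $\zz_p\oplus\zz_p$, the elementary divisors of $A$ over $\zz_{(p)}$ are $(p,p)$, so every entry of $A$ is divisible by $p$ and we may write $A = pA'$ with $A'$ symmetric over $\zz_{(p)}$ and $\det A' = -q$ a $p$--adic unit. Computing the linking form as $A^{-1} = \tfrac1p (A')^{-1}$ (up to the global sign convention of Lemma~\ref{githm}) and passing to the induced $\ff_p$--valued pairing on the $p$--part identifies the restriction of $\lk$ to $\zz_p\oplus\zz_p$ with the nonsingular symmetric $\ff_p$--form $(A')^{-1}\bmod p$. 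Its determinant is $1/\det A' = -1/q$, which by the hypothesis $q\equiv 1$ in $\ff_p^*/(\ff_p^*)^2$ lies in the square class of $-1$; a nonsingular symmetric form on $\ff_p^2$ of determinant $-1$ (equivalently discriminant the square class of $1$, in the conventions recalled before Theorem~\ref{theoremklein}) is hyperbolic, hence has an isotropic line, so the restricted linking form is metabolic, as desired.

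The main obstacle is bookkeeping rather than conceptual: one must verify that the passage from $A^{-1}$ to the $\ff_p$--form on the $p$--part is exactly $(A')^{-1}\bmod p$ (this is insensitive to the global sign in Lemma~\ref{githm}, since that sign is squared away in a $2\times 2$ determinant), and one must see that indefiniteness is genuinely used---a \emph{definite} form would give $\det A = +p^2 q$, hence determinant $+1/q$ and discriminant the square class of $-1$, which is in general not metabolic. A secondary point requiring care is the prime $p=2$, where the dictionary between square classes of rank--$2$ forms and metabolicity of linking forms on $2$--groups is more delicate than for odd $p$ and should be handled separately.
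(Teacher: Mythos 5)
Your proposal is correct and takes essentially the same route as the paper's proof: both invoke Gilmer's splitting lemma (Lemma~\ref{githm}), reduce by a case analysis to the situation where the metabolic summand is trivial, use indefiniteness to force $\det A = -p^2q$, and conclude by computing the square class of the determinant of the restricted form and applying the discriminant criterion of Theorem~\ref{witt}. The only difference is cosmetic bookkeeping: the paper restricts $A^{-1}$ to the $p$--part using the generators $qx$, $qy$ and scales by $p$, whereas you factor $A = pA'$ and reduce $(A')^{-1}$ modulo $p$ --- both identifications yield the same square class $-q \equiv -1$, hence discriminant $1$ and metabolicity.
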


\begin{proof} 
By Lemma~\ref{githm} in Appendix~\ref{giapp},
 {\em minus}
 the linking form on $H_1(M(K))$ can be written as $\beta_a \oplus \beta_m$, defined on the direct sum of groups $G_a \oplus G_m$, where $\beta_m$ is metabolic and $\beta_a$ is presented by $A$, the intersection matrix of $W$.   

Since a linking form on $\zz_p$  cannot be metabolic, the $\zz_p \oplus \zz_p$ summand of $H_1(M(K))$ is either entirely contained in $G_a$ or in $G_m$.   In the second case it is automatically metabolic, so we focus   $\zz_p \oplus \zz_p \subset  G_a$. 

  Similarly $\zz_q$ cannot be metabolic, so $\zz_q \subset G_a$. Thus, we are in the case that  $G_m$ is trivial and $G_a = \zz_p \oplus \zz_p \oplus \zz_q$, which we can abbreviate $G_p \oplus G_q$.

The matrix $A^{-1}$ represents the linking form on $\zz_p \oplus \zz_p \oplus \zz_{q}$ with respect to a pair of generators, say $x$, and $y$.  The $G_p$ summand of $\zz_p \oplus \zz_p \oplus \zz_{q}$ is generated by $qx$ and $qy$.  The linking form restricted to this summand is thus represented by the matrix $ q ^2 A^{-1}$.  

To convert the linking matrix into a matrix with $\ff_p$ entries, we multiply all the entries by $p$.  (Necessarily the entries of $pq^2 A^{-1}$ are all integers, since the linking numbers on $\zz_p \oplus \zz_p$ are all rational with denominator $p$.)  The determinant of this matrix is $\Delta = (pq^2)^2 / \det(A) = -(p^2 q^4)/ p^2 q$, with the minus sign appearing because $A$ is indefinite.  Removing squares,  we have $\Delta = -q \in \ff_p^* / (\ff_p^*)^2$.  

The discriminant of a rank 2 form is the negative of its determinant, so  the discriminant is $\delta = q \in \ff_p^* / (\ff_p^*)^2$.
To conclude the proof, we use  Theorem \ref{witt} in the appendix, stating that a form $\beta$ of rank $n$ is metabolic if and only if $n$ is even and the discriminant satisfies  disc($\beta) = 1 \in  \ff_p^* / (\ff_p^*)^2$.

\end{proof}

\section{Heegaard-Floer obstructions to bounding negative definite forms}\label{sectionnegdef}

\subsection{The Heegaard-Floer correction term}

We begin with a brief review of $Spin ^c$--structures on manifolds.  Recall that there is a canonical 2--fold covering space  $Spin(n) \to  SO (n)$.  The group $Spin^c(n)$ is defined to be the quotient $  Spin(n) \times_{\zz_2} S^1$ where $\zz_2$ acts on $Spin(n)$ by the covering involution and on $S^1$ by multiplication by $-1$.  There is a natural map  $Spin^c(n) \to$ $SO (n)$.  A $Spin^c$--structure on an $n$--manifold $N$ is a lifting of the principal $SO(n)$ bundle associated to the tangent bundle to a principal $Spin^c(n)$ bundle.  The canonical map $Spin^c(n) \to S^1$ associates a complex line bundle $\call(\xi)$ to a $Spin^c$ structure $\xi$ on $M$.  The first Chern class is defined by $c_1(\xi) = c_1(\call(\xi))$.  Here are a few basic facts.

\begin{itemize}

\item The tangent bundle lifts to a $Spin^c$ bundle if and only if there is an integer lifting of $w_2(N)\in H^2(N, \zz_2)$ to $H^2(N)$.

\item If $M$ has a $Spin^c$--structure, then there is a free transitive action of $H^2(N)$ on the set of $Spin^c$--structures.  Furthermore,  $c_1(x \xi) = c_1(\xi) + 2x$ for $x \in H^2(N)$.

\item Every smooth 4--manifold has a $Spin^c$--structure (see, for instance,~\cite{Morgan}).

\end{itemize}

In~\cite{os2}  an invariant $d(M,\spin) \in \qq$ is associated to each 3--manifold $M$ with $Spin^c$--structure $\spin$.  It satisfies   additivity:   
  $d(M_1 \cs M_2, \spin_1 \cs \spin_2) = d(M_1 , \spin_1  )+ d(  M_2,   \spin_2)$.  The main result we use is the following,  Theorem 9.6 in~\cite{os2}.

\begin{theorem}\label{osthm} Let $M$ be a rational homology 3--sphere and fix a $Spin^c$--structure $\spint$ over $M$. Then  for each smooth, negative-definite 4--manifold $W$ with boundary $M$, and for each $Spin^c$--structure $\spin \in Spin^c(W)$ with $\spin|_M = \spint$, we have that $$c_1(\spin)^2 + \beta_2(W) \le 4d(M, \spint).$$

\end{theorem}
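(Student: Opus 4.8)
The plan is to recognize this as Theorem 9.6 of Ozsv\'ath--Szab\'o and to reproduce its proof, which rests on the functoriality of Heegaard--Floer homology under $Spin^c$ cobordisms together with the special behavior of $HF^\infty$ on negative-definite cobordisms. First I would delete an open ball from $W$ to produce a cobordism $W_0$ from $S^3$ to $M$, carrying the restriction of $\spin$. Since $H_1(M,\qq)=0$ and $W$ is negative definite one has $\sigma(W_0)=\sigma(W)=-\beta_2(W)$ and, using $b_1(W)=b_3(W)=0$, $\chi(W_0)=\beta_2(W)$. The cobordism induces homomorphisms $F^\circ_{W_0,\spin}$ on $HF^\circ$ for $\circ\in\{+,-,\infty\}$ that commute with the maps of the long exact sequence, and by the standard grading-shift formula each is homogeneous of degree
$$\delta \;=\; \frac{c_1(\spin)^2 - 2\chi(W_0) - 3\sigma(W_0)}{4} \;=\; \frac{c_1(\spin)^2 + \beta_2(W)}{4}.$$
Thus the asserted inequality $c_1(\spin)^2+\beta_2(W)\le 4\,d(M,\spint)$ is precisely the claim that $\delta\le d(M,\spint)$.

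The key input, and the step I expect to be the main obstacle, is the fact that for a negative-definite cobordism with $b_1=0$ between rational homology spheres the induced map $F^\infty_{W_0,\spin}\co HF^\infty(S^3)\to HF^\infty(M,\spint)$ is an isomorphism. For a rational homology sphere $HF^\infty$ is free of rank one over $\qq[U,U^{-1}]$, so a graded module map between two such is an isomorphism as soon as it is nonzero; hence the real content is that $F^\infty_{W_0,\spin}\ne 0$. Proving this is the genuinely hard part: one decomposes $W_0$ into handles, reduces the effect of the two-handles via the surgery exact triangles, and checks that the composite on $HF^\infty$ survives precisely because the form is negative definite---equivalently, because $HF^\infty$ depends only on the cohomology ring of the boundary and is insensitive to the negative-definite part of a cobordism. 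I would invoke this as a cited lemma of Ozsv\'ath--Szab\'o rather than reprove it.

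Granting the isomorphism, the inequality will follow from a short diagram chase. Write $\pi$ for the natural map $HF^\infty\to HF^+$; by definition $d(M,\spint)$ is the minimal degree of a nonzero element of $\operatorname{im}(\pi_M)$, and this image is the tower $\calt^+_{(d)}$ whose bottom element lies in degree $d(M,\spint)$. From the commuting square $\pi_M\circ F^\infty_{W_0,\spin}=F^+_{W_0,\spin}\circ\pi_{S^3}$ and the surjectivity of $F^\infty_{W_0,\spin}$ I obtain
$$\operatorname{im}\!\big(F^+_{W_0,\spin}\big)\;\supseteq\;\pi_M\!\big(\operatorname{im}F^\infty_{W_0,\spin}\big)\;=\;\operatorname{im}\pi_M\;=\;\calt^+_{(d)}.$$
Hence the bottom generator of $\calt^+_{(d)}$, sitting in degree $d(M,\spint)$, is the image under $F^+_{W_0,\spin}$ of some class in $HF^+(S^3)\cong\calt^+_{(0)}$. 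Every nonzero homogeneous element of $\calt^+_{(0)}$ has degree $\ge 0$, and $F^+_{W_0,\spin}$ shifts degree by $\delta$, so that source class maps to degree $\ge\delta$. Comparing degrees yields $d(M,\spint)\ge\delta$, which is exactly the desired bound. The only remaining points are routine: verifying the grading-shift formula in this setting and the bookkeeping expressing $\sigma(W_0)$ and $\chi(W_0)$ in terms of $\beta_2(W)$.
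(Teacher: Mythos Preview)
The paper does not prove this theorem at all; immediately before stating it the authors write ``The main result we use is the following, Theorem 9.6 in~\cite{os2},'' and no argument follows. Your identification of the result as Ozsv\'ath--Szab\'o's Theorem~9.6 is exactly right, and your outline of their proof (puncture $W$ to obtain a cobordism from $S^3$ to $M$, compute the degree shift, invoke the isomorphism $F^\infty_{W_0,\spin}$ for negative-definite cobordisms, and chase the commuting square with $\pi\co HF^\infty\to HF^+$) is a faithful summary of the original argument. So you have gone well beyond what the paper does: the paper treats this as a black-box citation, while you sketch the actual Heegaard--Floer machinery behind it. One small caveat: you assume $b_1(W)=b_3(W)=0$ in computing $\chi(W_0)$ and $\sigma(W_0)$, but the theorem as stated (here and in~\cite{os2}) does not impose that hypothesis; in the original proof this is handled by first surgering out $b_1$, which leaves $c_1(\spin)^2$, $\beta_2$, and the boundary unchanged.
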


We recall the definition of $c_1(\spin)^2$.  Let $z \in H^2(W)$.  There is the exact sequence $H^2(W, M) \to H^2(W) \to H^2(M)$.  Since $H^2(M)$ is torsion, there is a class $z' \in H^2(W,M)$ which maps to $nz \in H^2(W)$ for some integer $n >0$.  We define $z^2 = (z' \cup z) [W,M]/n \in \qq$.  If the intersection form on $H^2(W,M)$ is given by a matrix $A$, then $z^2$ is given by $z^t A^{-1}z$ when  $z$ is written as a vector with respect to the appropriate basis.  

\subsection{Uniform bounds on $c_1^2$}

We will use the following, a proof of which can be found in~\cite{cassels}: 

\begin{theorem}\label{ff} For any given determinant $D$  and integer $n>0$, the set of isometry classes of     symmetric bilinear forms on $\zz^n$ with determinant $D$ is finite.

\end{theorem}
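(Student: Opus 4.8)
The plan is to recast the statement in matrix language and then invoke reduction theory together with the geometry of numbers. A symmetric bilinear form on $\zz^n$ is recorded by a symmetric integer matrix $A$ with $\det A = D$, and two such forms are isometric exactly when $A' = U^t A U$ for some $U \in GL_n(\zz)$. So the assertion to be proved is that the symmetric integer matrices with $\det = D$ fall into finitely many $GL_n(\zz)$--orbits. We may assume $D \neq 0$, so every form in question is nondegenerate, and the signature $(p,q)$ with $p+q=n$ is an isometry invariant taking at most $n+1$ values; hence it suffices to prove finiteness within a fixed signature. The strategy in each case is to show that every form of determinant $D$ is equivalent to one whose matrix entries are bounded by a constant $C(n,D)$, after which finiteness is immediate since only finitely many integer matrices have entries bounded by $C$.

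I would dispose of the definite case first, since it is the model for everything. The key geometry-of-numbers input is Hermite's theorem, a consequence of Minkowski's convex-body theorem applied to a ball: a positive definite integral form of rank $n$ and determinant $D$ represents a nonzero value $m$ with $0 < m \le c_n D^{1/n}$, where $c_n$ depends only on $n$. Iterating this yields a Minkowski-reduced representative in whose matrix the diagonal entries are bounded in terms of $n$ and $D$; because the form is definite, Cauchy--Schwarz then forces every off-diagonal entry to be bounded as well. Thus the reduced positive definite matrices of determinant $D$ form a finite set, and the negative definite case follows by negation.

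The main obstacle is the indefinite case, and it is genuine: when the form is indefinite, bounding the diagonal no longer bounds the off-diagonal entries, and the tempting inductive step of splitting off a vector $v$ realizing a small value $m = Q(v)$ fails, because a primitive $v$ with $Q(v)=m$ does not in general give an orthogonal direct-sum decomposition $\zz^n = \zz v \oplus v^{\perp}$ over $\zz$ unless $m$ divides the relevant pairings. The correct remedy, which is the substance of the argument in \cite{cassels}, is to control the matrix entries directly via the reduction theory of $GL_n(\zz)$ acting on the symmetric space of real forms of fixed signature. One replaces the noncompact stabilizer issue by attaching to each indefinite form a positive definite \emph{majorant}, applies Minkowski's theorem to the region cut out by the majorant to extract a represented value that is small relative to $D$, and uses this to exhibit a Siegel-type fundamental domain on which all matrix entries are bounded in terms of $n$ and $D$. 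Integral forms landing in this bounded region number finitely many, completing the fixed-signature count and hence the theorem.
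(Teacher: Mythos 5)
The paper gives no proof of this theorem at all---it is quoted with a citation to~\cite{cassels}---and your outline (reduction to fixed signature, Hermite/Minkowski reduction with Cauchy--Schwarz in the definite case, the positive-definite majorant and Siegel-domain reduction theory in the indefinite case) is precisely the classical argument found in that reference, to which you also defer for the technical core. So your proposal is correct and takes essentially the same route as the paper's source; the one caveat is that the statement must be read with $D \neq 0$ (as it is in the paper's application, where $D = |H_1(\partial W)|$), since degenerate forms of determinant $0$ genuinely fall into infinitely many isometry classes.
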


 \begin{corollary}\label{corbound} There is a number $N$ depending only on a pair of positive integers $n$  and $D$,  with the following property:  If $W$ is a 4--manifold  with a positive definite intersection form of rank $n$,  $ |H_1(\partial W)|=D$ and  $\calk' \subset H^2(W)$ is a coset of
$\ker (H^2(W) \to H^2(W, \zz_2))$, then there is an element
$z \in \calk'$ with $z^2 \le N$.

\end{corollary}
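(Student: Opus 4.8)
The plan is to reduce the statement to a finiteness assertion about positive definite integral forms, supplied by Theorem~\ref{ff}, followed by an elementary minimization over finitely many cosets. The guiding observation is that the pairing $z^2$ factors through the free quotient of $H^2(W)$: torsion classes pair trivially under the rational extension of the cup product used to define $z^2$, so if $A$ denotes the matrix of the intersection form on the free part of $H^2(W,M)$, then $z^2 = z^t A^{-1} z$, where $z$ stands for the image of the class in $H^2(W)/\text{torsion} \cong \zz^n$. Since $A$ is positive definite, so is $A^{-1}$, and $z \mapsto z^2$ is a positive definite rational quadratic form $Q_A$ on $\zz^n$.

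Next I would identify what becomes of the coset $\calk'$ on the free part. Writing $H^2(W) = \zz^n \oplus T$ with $T$ the torsion subgroup, the reduction $H^2(W) \to H^2(W,\zz_2)$ sends $(z_f, z_t)$ to $(z_f \bmod 2,\, z_t \bmod 2)$, so the free parts of $\ker\!\big(H^2(W) \to H^2(W,\zz_2)\big)$ are precisely $2\zz^n$. Hence the free parts of a coset $\calk'$ constitute a single coset $v + 2\zz^n$, and as $v$ ranges over $(\zz_2)^n$ there are at most $2^n$ such cosets. For each fixed $v$, the minimum $\min_{z \in v + 2\zz^n} Q_A(z)$ is attained and finite because $Q_A$ is positive definite and its sublevel sets are finite; set $N_A = \max_v \min_z Q_A(z)$, the maximum of these coset-minima over the at most $2^n$ cosets.

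To make the bound uniform I would invoke Theorem~\ref{ff}. The determinant of $A$ has absolute value $|H_1(\partial W)| = D$ by the long exact sequence of $(W, \partial W)$ together with duality, exactly as recorded in the appendix, and positive definiteness forces $\det A = D$. By Theorem~\ref{ff} there are only finitely many isometry classes of positive definite symmetric forms on $\zz^n$ of determinant $D$. Moreover $N_A$ is an isometry invariant: an isometry $P \in GL_n(\zz)$ with $P^t A' P = A$ carries $Q_A$ to $Q_{A'}$ via $(P^{-1})^t$, which preserves $2\zz^n$ and permutes the $2^n$ cosets, so it matches the coset-minima bijectively and preserves their maximum. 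Therefore $N := \max_A N_A$, taken over the finitely many isometry classes, is finite, depends only on $n$ and $D$, and for any $W$ as in the statement furnishes an element $z \in \calk'$ with $z^2 \le N$.

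The main obstacle is precisely the uniformity: making $N$ depend only on $n$ and $D$ rather than on the individual $W$. This is where Theorem~\ref{ff} is indispensable, since without the finiteness of isometry classes one could not exclude forms $A^{-1}$ that are arbitrarily elongated and thus force coset-minima to grow without bound. The remaining ingredients---factoring $z^2$ through the free part, recognizing the relevant cosets as translates of $2\zz^n$, and attaining the minimum of a positive definite form over a lattice coset---are routine and require no further input.
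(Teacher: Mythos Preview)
Your argument follows the paper's proof essentially line for line: pass to the free quotient where $z^2 = z^t A^{-1} z$, observe that the relevant cosets project to the $2^n$ translates of $2\zz^n$, minimize the positive definite form $A^{-1}$ over each coset, and then take the maximum over the finitely many isometry classes supplied by Theorem~\ref{ff}.

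The one slip is the assertion that $|\det A| = D$. The corollary does not assume $H_1(W)=0$, and the appendix computation you invoke requires exactly that hypothesis; in general Lemma~\ref{githm} only shows that $A$ presents a direct summand $F^*$ of $H_1(\partial W)$, so $|\det A|$ merely \emph{divides} $D$. The paper accordingly ranges over all positive definite forms on $\zz^n$ whose determinant divides $D$ --- still a finite collection by Theorem~\ref{ff} applied to each divisor --- and your argument goes through verbatim once you make this adjustment.
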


 \begin{proof} Let
$\zz^n = \Lambda$,
  and $\Lambda' = \ker (\Lambda \to \Lambda \otimes \zz_2)$.
  By Theorem \ref{ff},  there are finitely many
   isometry classes of positive definite bilinear forms on $\Lambda$ whose determinant divides $D$.
   Each is given by a matrix, say, $A.$
    For each coset of $C$ of $\Lambda'$, we choose a  vector
  $x \in C$
   which minimizes $q_A(x) = x^t A^{-1} x$.
Since $A^{-1}$ has as eigenvalues the reciprocals of the eigenvalues of $A$, it follows that $A^{-1}$ is also positive definite, and so there is  a minimum value.
The maximum of the values of such $q_A(x)$ taken over all finitely many $A$ chosen above and all  finitely many cosets, provides the desired bound $N$.

If $W$ is as hypothesized,  then the intersection form on $W$ is isometric
to one given by an $A$ considered above.
\end{proof}

One can give a simpler proof of this result. The proof we give provides a smaller $N$, which will useful in future applications.

 \subsection{Knots $K$ for which $M(K)$ does not bound negative definite}\label{dnbnd}
 
 \begin{theorem}\label{ndefthm} Let $G$ be a  finite abelian group of  odd order $D$.  Let $\beta$ be  a linking form on $G$.  There exists a knot $K$ such that $(M(K), \lk)  \cong (G, \beta)$  and $M(K)$ does not bound a negative definite 4--manifold $W$ with $\beta_2(W) = 2$.
 \end{theorem}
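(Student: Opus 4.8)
The plan is to realize the given linking form by an explicit knot and then connect-sum with many copies of a fixed auxiliary knot, so as to push every Heegaard-Floer correction term of the branched cover below a threshold dictated by Corollary~\ref{corbound}.

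First I would produce a knot $K_0$ with $(H_1(M(K_0)),\lk)\cong (G,\beta)$. Since $\lvert G\rvert = D$ is odd, the classification of linking forms on odd-order groups writes $(G,\beta)$ as an orthogonal direct sum of linking forms on cyclic groups $\zz_{p^k}$, each of which is realized by a $2$--bridge knot (whose branched double cover is a lens space). Because $M(K_1 \cs K_2) = M(K_1)\cs M(K_2)$, and the linking form of a connected sum is the orthogonal direct sum of the linking forms of the summands, the connected sum of these $2$--bridge knots supplies the desired $K_0$; this is where the oddness of $D$ is used. Next I would fix an auxiliary knot $J_0$ whose branched double cover is an integral homology sphere with nonzero correction term. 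Concretely, $\det(T(3,5)) = 1$, so $M(T(3,5)) = \Sigma(2,3,5)$ is a homology sphere and $d(\Sigma(2,3,5)) \ne 0$; replacing $J_0$ by its mirror if necessary (which reverses the orientation of the branched cover and hence the sign of $d$), I may assume $d(M(J_0)) < 0$.

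Set $K = K_0 \cs J_0 \cs \cdots \cs J_0$, the connected sum of $K_0$ with $n$ copies of $J_0$. Then $M(K) = M(K_0)\cs M(J_0)\cs\cdots\cs M(J_0)$, and since each $M(J_0)$ is a homology sphere the linking form is unchanged, so $(H_1(M(K)),\lk)\cong(G,\beta)$ as required. Each $Spin^c$--structure $\spint$ on $M(K)$ restricts to the unique structure on every $M(J_0)$ summand and to some $\spint_0$ on $M(K_0)$, so additivity of the correction term gives
\[
d(M(K),\spint) = d(M(K_0),\spint_0) + n\, d(M(J_0)) .
\]
As $n\to\infty$, every value $d(M(K),\spint)$ tends to $-\infty$, uniformly over the finitely many $\spint_0$.

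Now suppose toward a contradiction that $M(K)$ bounds a negative definite $W$ with $\beta_2(W) = 2$. Applying Corollary~\ref{corbound} to $\overline W$ (positive definite, rank $2$, with $\lvert H_1(\partial \overline W)\rvert = D$) and to the coset $\calk'$ of characteristic covectors yields a $Spin^c$--structure $\spin_0$ on $W$ with $c_1(\spin_0)^2 \ge -N$, where $N = N(2,D)$ is the constant of Corollary~\ref{corbound}, which depends only on $D = \lvert G\rvert$. Writing $\spint_0 = \spin_0|_{M(K)}$, Theorem~\ref{osthm} gives
\[
-N + 2 \le c_1(\spin_0)^2 + \beta_2(W) \le 4\,d(M(K),\spint_0),
\]
so $d(M(K),\spint_0) \ge (2-N)/4$. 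Since $N$ is fixed once $G$ is fixed, I would choose $n$ in the construction large enough that $d(M(K),\spint) < (2-N)/4$ for all $\spint$, contradicting the displayed inequality.

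The main obstacle is making the bound genuinely uniform: I must confirm that the constant $N$ from Corollary~\ref{corbound} depends only on the rank $2$ and on $D$ (in particular not on $W$, and not on $n$), and that the $Spin^c$--structures on $W$ form a single coset of characteristic covectors to which the corollary applies, with the sign of $c_1^2$ correctly tracked under reversal of orientation from $W$ to $\overline W$. Granting this, the constant $N$ is determined by $G$ \emph{before} the auxiliary summands are chosen, and the argument closes.
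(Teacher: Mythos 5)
Your argument is correct, and its skeleton is the paper's own: realize $(G,\beta)$ by a connected sum of $2$--bridge knots, add $n$ copies of an auxiliary knot whose double branched cover is a $\zz$--homology sphere with negative correction term, and for large $n$ play the uniform bound of Corollary~\ref{corbound} (which depends only on the rank $2$ and on $D$) against Theorem~\ref{osthm}; your orientation bookkeeping between $W$ and its reversal, and your identification of the set of classes $c_1(\spin)$ with a coset of $\ker(H^2(W)\to H^2(W,\zz_2))$, are handled exactly as in the paper. The one genuine difference is the auxiliary knot. The paper takes $J$ with $\Delta_J(t)=1$ and $d(M(J))<0$, whose existence is a nontrivial result of Manolescu--Owens~\cite{mo}; you take $T(3,5)$, which indeed has determinant $1$ (so $M(T(3,5))=\Sigma(2,3,5)$ is a homology sphere and the linking form is untouched) and nonzero correction term, with the sign fixed by mirroring. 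Your choice is more elementary and explicit, and it proves Theorem~\ref{ndefthm} in full. What the paper's stronger choice buys shows up only downstream: an Alexander-polynomial-one knot has vanishing signature and Arf invariant and is topologically slice by Freedman, so the paper can assert in Section~\ref{sectionexplicitkknot} that the construction changes neither signature nor Arf (with $T(3,5)$ the signature shifts by $-8n$ --- harmless there, since only its residue mod $8$ enters Theorem~\ref{pdefthm}, but the literal claim would fail), and, more importantly, topological sliceness of $J$ is precisely what drives the proof of Theorem~D, where $T(3,5)$ could not be substituted.
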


 \begin{proof}
 Any linking form $(G, \beta)$ on an odd order abelian group $G$ can be diagonalized.  Any one dimensional linking form on an odd order cyclic group is realized as the linking form of the 2--fold branched cover of a 2--bridge knot, a lens space.  Thus, every linking form on an odd order group is realized as the linking form on $M(K)$ for some knot $K$. For more details, see Appendix~\ref{GS}.
 
  For any knot $K$ with $H_1(K) \cong G$ and linking form $\beta$, suppose that $M(K)$ does bound such a manifold $W$ with intersection form $A$.  By Lemma~\ref{githm}, $|\det(A)|$ divides $D$ (since a summand of $H_1(M(K))$ is presented by $A$).  
 
 Choose a $Spin^c$--structure $\spin$ on $W$ and let $\spint$ denote the restriction of $\spin$ to $M(K)$.  For any $x \in H^2(W)$, the $Spin^c$--structure $x\spin$ has $c_1(x\spin) = c_1(\spin) + 2x$.  Thus, the set of all $Spin^c$ structures on $W$ maps via $c_1$ onto a coset of $\calk \subset H^2(W)$.  By Corollary~\ref{corbound} there is a $N>0 $ (depending only on $D$, not on $W$) such that  for some $Spin^c$--structure $\spin'$
on $W
 $, $-N < c_1(\spin')^2  <0$.  (Here we have switched to negative definite from positive definite.)  
Let $\spint'$ denote the restriction of $\spin'$ to $M(K)$.  
 
  By Theorem~\ref{osthm}  we have  
  $4d(M(K), \spint') \ge -N +2$.
   To complete the proof, we show there is a knot $K$ such that this bound fails for all $Spin^c$--structures on $M(K)$.

For any knot $J$ with Alexander polynomial $\Delta_J(t) = 1$,  $H_1(M(J)) = H^2(M(J)) = 0$.  According to~\cite{mo}, there exists such a knot $J$ for which $d(M(J),\spint_0) = \alpha < 0,$ where  $\spint_0$ is the unique $Spin^c$--structure on $M(J)$.  
 
Let $K$ be some knot with   $(M(K), \lk)  \cong (G, \beta)$.  Let $d$ be the maximum value of $d(M(K),\spin)$ taken over all $Spin^c$--structures on $M(K)$.   Now, consider
 $K_n=
 K \cs nJ$.  Each $Spin^c$--structure on $M(K_n) = M(K) \cs_n M(J)$ is the connected sum of a $Spin^c$--structure on $ M(K) $ and a $Spin^c$--structure on each $M(J)$ summand.  Since $M(J)$ has a unique $Spin^c$--structure,  the maximum value of $d(M(K_n), \spin)$ taken over all $Spin^c$--structures is $d + n\alpha$.  We have arranged that  $\alpha$ is negative, so by choosing $n$ large enough, we can ensure that $ 4d(M(K_n), \spin) < -N+2 $.  Thus, $M(K_n)$ cannot bound a negative definite $W$ with $\beta_2(W) = 2$.
 
 \end{proof}
 
 \section{Knots that do not bound Klein bottles}\label{sectionexplicitkknot}
       
       This section presents examples of  knots for which the obstructions developed in Theorem~\ref{pdefthm} and Theorem~\ref{indefthm}  apply to show that the branched cover $M(K)$ does not bound either a positive definite or indefinite 4--manifold of rank 2.  Theorem~\ref{ndefthm} can be applied to build from these examples knots $K$ for which $M(K)$ does not bound a negative definite 4--manifold of rank 2.  Since the construction in Theorem~\ref{ndefthm} does not change the homology, linking form of the 2--fold branched cover, signature, or Arf invariant, the obstructions of Theorems~\ref{pdefthm} and~\ref{indefthm} continue to apply.  Thus, these provide the examples necessary to complete the proof of Theorem~B.

 If $A$ is the symmetrized Seifert matrix for a knot $K$ (that is, $A = V + V^t$, where $V$ is a Seifert matrix for $K$), then the signature of $K$ is $\sigma(K)= \Sign(A),$
 and the determinant is $D(K)=|\det(A)|$. 
 Moreover,  minus the linking form on  the 2--fold branched cover of $S^3$ along $K$ is presented by $A$. Using Theorem \ref{Mil} and Lemma \ref{La}, one sees that the linking form on the double branched cover determines $\sigma(K)$ modulo eight. This relies of work of Taylor \cite{taylor} and Milgram~\cite[Appendix 4]{mh} which is described in the appendix.  (An alternative proof can be based on Lemma~4.1 of~\cite{owst}, which is also developed from the work of Milgram and Taylor.)  One obtains a nice formula for this in the case that $D(K)$ is a prime. 

 \begin{theorem}\label{taylorthm}
  Let $p$  be an odd prime.  
 If $K$ has 2--fold branched cover $M(K)$ with $H_1(M(K)) = \zz_p$ and with 
  minus the linking form taking value $a$ on a generator, then $$\sigma(K) \equiv 2\binom{a}{p} - p -1 \mod 8.$$
\end{theorem}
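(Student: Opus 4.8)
The plan is to reduce the statement to the evaluation of a classical quadratic Gauss sum on $\zz_p$. Recall from the discussion preceding the theorem that $\sigma(K) = \Sign(A)$, where $A = V + V^t$ is a symmetrized Seifert matrix, and that $A$ presents minus the linking form on $H_1(M(K)) \cong \zz_p$; thus $A$ is an integral symmetric matrix with $|\det A| = p$. By Theorem~\ref{Mil} and Lemma~\ref{La}, the residue $\Sign(A) \bmod 8$ depends only on the linking form, and in fact is computed by the Milgram--Taylor Gauss sum attached to it. The first step, therefore, is to record the Gauss-sum identity
\[
e^{2\pi i\, \Sign(A)/8} \;=\; \frac{1}{\sqrt{p}}\,\sum_{x \in \zz_p} e^{\pi i\, q(x)},
\]
where $q \co \zz_p \to \qq/2\zz$ is the quadratic refinement of minus the linking form supplied by those results. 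Since $e^{2\pi i\, \Sign(A)/8}$ determines $\Sign(A)$ modulo $8$, it suffices to evaluate the right-hand side.

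Second, I would pin down $q$ on the generator $g$ with $\mu(g,g) = a/p$, where $\mu$ denotes minus the linking form. Because $p$ is odd, the class of $a$ in $\zz/p$ has an even representative (replace $a$ by $a+p$ if necessary), a change that alters neither the $\qq/\zz$-value nor $\binom{a}{p}$; with this choice the unique $\qq/2\zz$-valued refinement satisfies $q(xg) = a x^2/p$. Writing $a = 2c$, the sum becomes an honest quadratic Gauss sum, and I would invoke the classical evaluation
\[
\sum_{x=0}^{p-1} e^{2\pi i\, c x^2/p} = \binom{c}{p}\, g_p, \qquad g_p = \varepsilon_p\sqrt{p},
\]
with $\varepsilon_p = 1$ for $p \equiv 1 \bmod 4$ and $\varepsilon_p = i$ for $p \equiv 3 \bmod 4$. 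This yields $e^{2\pi i\,\Sign(A)/8} = \binom{c}{p}\,\varepsilon_p$.

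The final step is phase matching. Using $\binom{c}{p} = \binom{2}{p}\binom{a}{p}$ together with the supplement $\binom{2}{p} = (-1)^{(p^2-1)/8}$, one checks, by running through the four residues of $p$ modulo $8$, the identity $\binom{c}{p}\,\varepsilon_p = e^{2\pi i\,(2\binom{a}{p} - p - 1)/8}$. Comparing with the Gauss-sum identity of the first step then gives $\Sign(A) \equiv 2\binom{a}{p} - p - 1 \bmod 8$, which is the claim.

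I expect the main obstacle to be fixing the precise normalization of the Milgram--Taylor Gauss sum for a possibly \emph{odd} lattice $A$ and identifying the induced refinement $q \co \zz_p \to \qq/2\zz$ — in particular verifying that the correct lift of the linking value $a/p$ to $\qq/2\zz$ is the one forcing $a$ even, so that the sum is genuinely a classical quadratic Gauss sum rather than a half-integer variant. This is exactly the content imported from Theorem~\ref{Mil} and Lemma~\ref{La}; once it is in hand, the remaining Gauss-sum evaluation and the residue check modulo $8$ are routine. As a consistency check, the right- and left-handed trefoils, with $H_1 = \zz_3$ and $\mu(g,g) = 1/3$ or $2/3$, give $\sigma \equiv -2$ and $\sigma \equiv 2 \bmod 8$, matching the formula.
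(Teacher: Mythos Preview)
Your proposal is correct and is essentially the paper's own argument: apply Milgram's formula (Theorem~\ref{Mil}) to the even lattice $A=V+V^t$ to get $e^{2\pi i\sigma(K)/8}=\Lambda(\beta_A)$, identify $\beta_A$ with $\ell(p,1,a)$, and evaluate the Gauss sum---the paper quotes Lemma~\ref{La} directly, while you unpack that lemma via the classical Gauss sum and the supplement for $\binom{2}{p}$, which is exactly how Lemma~\ref{La} is deduced from Lemma~\ref{ta}. Your flagged ``main obstacle'' about a possibly odd lattice is a non-issue here: $A=V+V^t$ has diagonal entries $2V_{ii}$, so it is automatically even and Theorem~\ref{Mil} applies without any further normalization.
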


 \subsection{Levine's theorem on the Arf invariant}

 Levine proved \cite{le2} that the order of $H_1(M(K))$ determines the Arf invariant of $K$:  $\Arf(K) = 0 $ if $D(K)\equiv \pm 1 \mod 8$ and $\Arf(K) = 1$ if  $D(K)\equiv \pm 3 \mod 8$. Murasugi \cite{murasugi2} independently obtained this result shortly thereafter. Kauffman has a nice  proof \cite[Theorem 10.9]{Ka}.  
We perform some arithmetic  to restate this using the greatest integer function, $\lfloor \cdot\rfloor$, as:

\begin{theorem}\label{levinethm}  $\Arf(K) \equiv \lfloor \frac{D(K)+1}{4}\rfloor \mod 2$.
\end{theorem}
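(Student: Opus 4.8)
The plan is to deduce Theorem~\ref{levinethm} purely arithmetically from the Levine--Murasugi theorem quoted just above, since the present statement only repackages that result in a single closed form. First I would record the one structural fact that makes this possible: $D(K)$ is always odd. Indeed, $D(K) = |\det(V+V^t)| = |\Delta_K(-1)|$ for a knot is odd (equivalently, $H_1(M(K))$ has odd order), so $D(K) \bmod 8$ can only be one of the four residues $1, 3, 5, 7$. The content of Levine--Murasugi is precisely that $\Arf(K) = 0$ on the pair $\{1,7\}$ (the classes $\pm 1$) and $\Arf(K) = 1$ on the pair $\{3,5\}$ (the classes $\pm 3$), so it suffices to check that $\lfloor (D(K)+1)/4\rfloor \bmod 2$ takes the same values on these four residues.

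Next I would carry out the finite verification by writing $D(K) = 8k + r$ with $r \in \{1,3,5,7\}$ and evaluating the floor in each case. For $r=1$ one gets $\lfloor (8k+2)/4\rfloor = 2k$, for $r=3$ one gets $\lfloor (8k+4)/4\rfloor = 2k+1$, for $r=5$ one gets $\lfloor (8k+6)/4\rfloor = 2k+1$, and for $r=7$ one gets $\lfloor (8k+8)/4\rfloor = 2k+2$. Reducing modulo $2$, the value is $0$ exactly when $r \in \{1,7\}$ and $1$ exactly when $r \in \{3,5\}$. Since $r \in \{1,7\}$ is the condition $D(K) \equiv \pm 1 \bmod 8$ and $r \in \{3,5\}$ is the condition $D(K) \equiv \pm 3 \bmod 8$, these values agree term by term with the values of $\Arf(K)$ prescribed by Levine and Murasugi, completing the identification.

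I expect essentially no genuine obstacle here: once the oddness of $D(K)$ is in hand, the theorem is a four-line table over the odd residues modulo eight. The only point deserving a moment's care is tracking the floor function across $r=5$ and $r=7$, where $D(K)+1$ fails to be divisible by $4$ in the first case but is divisible in the second; this is exactly what the case computation above resolves, so the proof reduces to displaying that computation and citing the Levine--Murasugi theorem.
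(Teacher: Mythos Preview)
Your proposal is correct and is exactly what the paper does: it simply cites the Levine--Murasugi result and says ``we perform some arithmetic to restate this using the greatest integer function,'' without spelling out the case check. Your four-line table over the odd residues modulo eight is precisely that arithmetic, so there is nothing to add.
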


\subsection{Two bridge knots and connected sums of lens spaces}

The previous results can   be applied to the connected sum of 2--bridge knots, $  K_{p/a} \cs K_{p/a} \cs K_{q/b}$, with 2--fold branched  cover   the connect sum of lens spaces, $L(p,a) \cs L(p,a) \cs L(q,b)$.   
Minus  
the linking form on $\zz_p \oplus \zz_p \oplus \zz_q$ is given by the diagonal matrix with diagonal $[ \frac{a}{p}, \frac{a}{p},
\frac{b}
{p}]$.
Here we follow the convention 
that $L(p,q)$ is $p/q$--surgery along the unknot in $S^3$.

\begin{theorem} 
Let $p$ and $q$ be odd primes, and let $J$ be a knot whose double branched cover has the same linking form as 
$K_{p/a} \cs K_{p/a} \cs K_{q/b}$.
If $J$ 
bounds a smoothly embedded punctured Klein bottle   $F \subset B^4$  and $W(F)$ is the 2--fold branched cover of $B^4$ branched over $F$, then

\begin{itemize}
\item If $p \equiv 3 \mod 4$ and $q \equiv 1 \in \ff_p^* / (\ff_p^*)^2$, then $W(F)$ is not indefinite.\vskip.05in

\item If 
$4 \lfloor \frac{q +1}{4}\rfloor   + 2\binom{b}{q}   - 2p - q   \equiv 5 \mod 8$, then $W(F)$
 is not positive definite. 

\end{itemize}

\end{theorem}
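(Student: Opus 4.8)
The plan is to handle the two bullet points separately, each by assuming $W(F)$ has the indicated intersection form and deriving a contradiction. In both cases the setup is the same: since $J$ bounds a punctured Klein bottle $F$, and a punctured Klein bottle has $\beta_1(F)=2$, Lemma~\ref{bettilemma} gives $\beta_2(W(F))=2$, while $\partial W(F)=M(J)$ has $H_1(M(J))=\zz_p\oplus\zz_p\oplus\zz_q$ with minus the linking form diagonal $[\frac{a}{p},\frac{a}{p},\frac{b}{q}]$. Thus $W(F)$ is exactly a rank-$2$ filling of the kind to which Theorems~\ref{indefthm} and~\ref{pdefthm} apply.

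For the first bullet I would argue by contradiction: suppose $W(F)$ is indefinite. Because $q\equiv 1\in\ff_p^*/(\ff_p^*)^2$, Theorem~\ref{indefthm} forces the linking form restricted to the $\zz_p\oplus\zz_p$ summand to be metabolic. I then compute the discriminant of this rank-$2$ form: passing the diagonal form $[\frac{a}{p},\frac{a}{p}]$ to $\ff_p$-coefficients by multiplying by $p$ yields $\mathrm{diag}(a,a)$, with determinant $a^2$, so the discriminant (minus the determinant) is $-a^2\equiv -1\in\ff_p^*/(\ff_p^*)^2$. Since $p\equiv 3\bmod 4$, the class $-1$ is a nonsquare, so the discriminant is not $1$, and Theorem~\ref{witt} shows the form is \emph{not} metabolic — a contradiction. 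Hence $W(F)$ is not indefinite.

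For the second bullet, suppose $W(F)$ is positive definite; then Theorem~\ref{pdefthm} requires $\sigma(J)+4\Arf(J)\equiv 0,2,\text{ or }4\bmod 8$, and I aim to show the left-hand side is in fact $\equiv 6$. Since the linking form of the double branched cover determines $\sigma$ modulo $8$, I replace $\sigma(J)$ by $\sigma(K_{p/a}\cs K_{p/a}\cs K_{q/b})$, use additivity of the signature, and apply Theorem~\ref{taylorthm} to each lens-space factor, obtaining $\sigma(J)\equiv 4\binom{a}{p}+2\binom{b}{q}-2p-q-3\bmod 8$. The crucial simplification is that $\binom{a}{p}=\pm 1$, so $4\binom{a}{p}\equiv 4\bmod 8$ regardless of sign, giving $\sigma(J)\equiv 2\binom{b}{q}-2p-q+1\bmod 8$. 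For the Arf term, $D(J)=p^2q\equiv q\bmod 8$ (as $p^2\equiv 1\bmod 8$), and $\lfloor\frac{n+1}{4}\rfloor\bmod 2$ depends only on $n\bmod 8$, so Theorem~\ref{levinethm} gives $4\Arf(J)\equiv 4\lfloor\frac{q+1}{4}\rfloor\bmod 8$. Adding these, $\sigma(J)+4\Arf(J)\equiv 4\lfloor\frac{q+1}{4}\rfloor+2\binom{b}{q}-2p-q+1\bmod 8$, which by the hypothesis equals $5+1=6\bmod 8$, contradicting membership in $\{0,2,4\}$; hence $W(F)$ is not positive definite.

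The step I expect to be the main obstacle is not conceptual but the mod-$8$ bookkeeping in the second bullet: keeping the signs of the $-p-1$ and $-q-1$ contributions straight, confirming that only $\sigma\bmod 8$ (not the exact signature) is needed so that Theorem~\ref{taylorthm} may be invoked factor-by-factor on a connected sum, and justifying the two reductions $4\binom{a}{p}\equiv 4$ and $D(J)\equiv q\bmod 8$ that collapse all $p$-dependence into the stated formula. The first bullet is comparatively short, its only real content being the identification of the restricted discriminant as $-1$ and the nonresidue condition $p\equiv 3\bmod 4$.
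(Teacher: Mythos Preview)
Your proof is correct and follows exactly the paper's approach: the first bullet via Theorem~\ref{indefthm} together with the observation that the diagonal form $[\tfrac{a}{p},\tfrac{a}{p}]$ has discriminant $-1$ and hence is nonmetabolic when $p\equiv 3\bmod 4$, and the second bullet via Theorem~\ref{pdefthm} combined with Theorems~\ref{taylorthm} and~\ref{levinethm}. The paper's own proof is terser, recording only these references and the phrase ``algebraic simplification''; you have carried out that simplification explicitly and correctly, including the key reductions $4\binom{a}{p}\equiv 4\bmod 8$ and $D(J)=p^2q\equiv q\bmod 8$.
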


\begin{proof} The first statement follows from Theorem~\ref{indefthm}.  Notice that the diagonal form $[\frac{a}{p},\frac {a}{p}]$ is not metabolic if $p \equiv 3 \mod 4$.  The second statement follows from Theorem~\ref{pdefthm}, using Theorems~\ref{taylorthm} and~\ref{levinethm}, along with algebraic simplification.
\end{proof}

\begin{proof}[Proof of Theorem B] Consider the knot $3_1 \cs 3_1 \cs -5_2$, which in 2--bridge notation is $K_{3/1} \cs K_{3/1} \cs K_{7/4}$, or any knot with the same linking form. (The naming conventions used here for $3_1$ and $5_2$ are those of~\cite{chaliv}.) It is immediate from the theorem that $K$ cannot bound a Klein bottle $F$ with $W(F)$ indefinite.   
Nor can $K$  bound a Klein bottle $F$ with $W(F)$   positive definite, as we compute that $ \lfloor \frac{7 +1}{4}\rfloor = 2$ and $\binom{4}{7} = 1$. 
Thus such a knot may be modified as in subsection \ref{dnbnd}, so that the new $K$ cannot bound a Klein bottle at all.
Note that the diagonal matrix with diagonal entries $[-3,-21]$ presents  the linking form $\ell(3,1,-1) \oplus \ell(3,1,-1) \oplus \ell(7,1,1)$.
Here we use the notation of Appendix~\ref{GS}: $\ell(p,n,a) $ is the linking form on $\zz_{p^n}$ taking value $a/p^n$ on a generator. Also, according to Wall \cite{wall}, $\ell(3,1,-1) \oplus \ell(3,1,-1) \approx \ell(3,1,1) \oplus \ell(3,1,1)$. Thus the linking form for $K_{3/1} \cs K_{3/1} \cs K_{7/4}$ (and minus this linking form)  has a rank two presentation.
\end{proof}
   
 \section{Casson-Gordon invariants}\label{cgsection}
 The bounds we develop in proving Theorem C depend on Casson-Gordon invariants, as defined in~\cite{cg}.  Here we give a brief review of the definitions, limiting ourselves to the simplest version that applies to the problem at hand.
 
Suppose that $M$ is a closed, oriented, connected 3--manifold and $\rho\co H_1(M) \to \zz_p$ a homomorphism.  Then there exists a 4--manifold $W$ with $\partial W  = kM$ for some $k>0$ and a homomorphism $\tilde{\rho}\co H_1(W) \to \zz_p$ such that $\tilde{\rho}$ restricts to give $\rho$ on each boundary component.   Let $\Sign(W, \rho) $ denote the signature of the intersection form of $H_2(\widetilde{W},\cc)$ restricted to  the $e^{2\pi i/p}$--eigenspace for a generator,  
say $T$, for the $\zz_p$--action on $H_2(\widetilde{W}, \cc)$. Here $\widetilde{W}$ is the $p$--fold cover of $W$ associated to $\tilde{\rho}$, and $T$ acts on $\widetilde{W}$ as follows: If $\tilde \gamma: I \rightarrow \widetilde{W}$ is a lift of a loop $\gamma: I \rightarrow {W}$,  one has that $T^{\rho([\gamma])}\tilde \gamma(0)=  \tilde \gamma(1)$.

The following Casson-Gordon invariant is shown to be well defined in~\cite{cg}, where it is also shown to be a disguised form of the Atiyah-Singer $\alpha$ invariant of the pair $(M^3, \rho)$.

\begin{definition}   $\sigma(M, \rho)  = \frac{1}{k}(\Sign(W,\rho) - \Sign(W))$.
\end{definition}

\begin{definition}   For a knot $K$ with 2--fold branched cover $M$ and $\rho\co H_1(M)\to \zz_p$, define  $\sigma(K, \rho) = \sigma(M, \rho)$.
\end{definition}

\begin{theorem}\label{cgtheorem}  If $M = \partial W$, $H_1(M, \qq) = 0$, and the inclusion $j\co \pi_1(M) \to \pi_1(W)$ is surjective, then for each $\rho$ defined on $M$ that extends to $W$, $$|\sigma(M,\rho) | \le 2\beta_2(W) + 1 + \frac{1}{p-1}\beta_1(\widetilde{M})\in \zz.$$ In the special case that $\rho$ is trivial, $\sigma(M,\rho) = 0$.
\end{theorem}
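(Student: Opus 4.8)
The plan is to compute $\sigma(M,\rho)$ directly from the given $W$. Since $\partial W = M$ we are in the case $k=1$, so by well-definedness of the invariant, $\sigma(M,\rho) = \Sign(W,\rho) - \Sign(W)$, where the two signatures are those of the intersection form on $H_2(W;\cc)$ and of its restriction to the $\omega$-eigenspace ($\omega = e^{2\pi i/p}$) of $H_2(\widetilde W;\cc)$. The strategy is to bound each signature by the dimension of the space carrying its form and to count dimensions in the $\omega$-eigenspace of the $p$-fold cover. Before the estimate, I would dispose of the trivial case: if $\rho$ is trivial then $\tilde\rho$ is trivial, $\widetilde W$ is the disconnected $p$-fold cover, and the deck generator permutes the $p$ copies of $H_2(W;\cc)$ cyclically; the $\omega$-eigenspace is a twisted-diagonal copy of $H_2(W;\cc)$ on which the form is a positive multiple of the ordinary one, so $\Sign(W,\rho) = \Sign(W)$ and $\sigma(M,\rho) = 0$, as claimed.

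For the general (nontrivial) case I would first record that since $H_1(M,\qq)=0$, Poincar\'e duality gives $H_2(M;\cc)=0$, so the ordinary intersection form on $H_2(W;\cc)$ is nondegenerate and $|\Sign(W)| \le \beta_2(W)$. Likewise $|\Sign(W,\rho)| \le e_2 := \dim_\cc H_2(\widetilde W;\cc)_\omega$. Writing $m_1 := \dim_\cc H_1(\widetilde M;\cc)_\omega$ and using that $\widetilde M \to M$ is the cyclic $p$-fold cover with $H_1(M;\cc)=0$ and all nontrivial eigenspaces Galois-conjugate (hence equidimensional, $p$ prime), one gets $\beta_1(\widetilde M) = (p-1)m_1$, so $\tfrac{1}{p-1}\beta_1(\widetilde M) = m_1 \in \zz$ (accounting for the ``$\in\zz$'' in the statement). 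The whole theorem then reduces to the single inequality $e_2 \le \beta_2(W) + 1 + m_1$, which combined with the two signature bounds yields $|\sigma(M,\rho)| \le 2\beta_2(W) + 1 + m_1$.

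To prove $e_2 \le \beta_2(W) + 1 + m_1$ I would run an Euler-characteristic count in the $\omega$-eigenspace. Surjectivity of $\pi_1(M)\to\pi_1(W)$ forces $H_1(M;\qq)\twoheadrightarrow H_1(W;\qq)$, so $\beta_1(W)=0$, and by Lefschetz duality and the long exact sequence of $(W,M)$ also $\beta_3(W)=0$; hence $\chi(W) = 1 + \beta_2(W)$. Since a rank-one local system preserves Euler characteristic, $\chi(W) = e_0 - e_1 + e_2 - e_3 + e_4$, and $e_0 = e_4 = 0$ because the monodromy is nontrivial ($W$ connected) and $W$ has nonempty boundary. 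This gives $e_2 = 1 + \beta_2(W) + e_1 + e_3$, so it remains to show $e_1 + e_3 \le m_1$. Surjectivity passes to the covers, giving a surjection $H_1(\widetilde M;\cc)_\omega \twoheadrightarrow H_1(\widetilde W;\cc)_\omega$; together with the equivariant (eigenspace) form of half-lives-half-dies applied to $\widetilde M = \partial \widetilde W$ this yields $e_1 = \tfrac12 m_1$. The long exact sequence of $(\widetilde W,\widetilde M)$ together with $H_0(\widetilde M;\cc)_\omega = 0$ shows $H_1(\widetilde W;\cc)_\omega \twoheadrightarrow H_1(\widetilde W,\widetilde M;\cc)_\omega$, and Lefschetz duality identifies the target (up to complex conjugation of the character) with $H_3(\widetilde W;\cc)_\omega$; hence $e_3 \le e_1 = \tfrac12 m_1$. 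Adding gives $e_1 + e_3 \le m_1$, completing the estimate.

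The main obstacle is the \emph{equivariant} half-lives-half-dies step, i.e.\ showing that the metabolizer $\ker\bigl(H_1(\widetilde M;\cc)\to H_1(\widetilde W;\cc)\bigr)$ meets the $\omega$-eigenspace in exactly half its dimension. The subtlety is that the nondegenerate pairing on $H_1(\widetilde M;\cc)$ couples the $\omega$- and $\bar\omega$-eigenspaces rather than acting within a single eigenspace, so the Lagrangian condition relates $K_\omega$ to the annihilator of $K_{\bar\omega}$; one closes the argument using that the deck action is by isometries and semisimple (so every long exact sequence splits into eigenspaces) and that complex conjugation gives $\dim(\cdot)_\omega = \dim(\cdot)_{\bar\omega}$. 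Verifying that the duality identifications are genuinely equivariant, with the contragredient action interchanging $\omega$ and $\bar\omega$, is the point requiring the most care; everything else is bookkeeping with exact sequences.
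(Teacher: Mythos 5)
Your architecture---bounding $|\Sign(W)|$ by $\beta_2(W)$ and $|\Sign(W,\rho)|$ by $e_2=\dim_\cc H_2(\widetilde{W};\cc)_\omega$, then computing $e_2$ by an Euler-characteristic count in the $\omega$-eigenspace---is sound, and it is essentially the paper's proof run one eigenspace at a time (the paper computes $\chi(\widetilde{W})$ globally, shows $\beta_3(\widetilde{W})=0$, bounds $\beta_1(\widetilde{W})\le\beta_1(\widetilde{M})$, and then divides among the $p-1$ Galois-conjugate eigenspaces). The trivial-$\rho$ case, the identities $e_0=e_4=0$ and $e_2=1+\beta_2(W)+e_1+e_3$, and $\beta_1(\widetilde{M})=(p-1)m_1$ are all fine. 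The gap is precisely the step you flag as the crux: the ``equivariant half-lives-half-dies'' equality $e_1=\tfrac12 m_1$. No such statement is true, equivariantly or otherwise. The classical Lagrangian-kernel lemma concerns even-dimensional boundaries (e.g.\ surfaces bounding 3--manifolds), where the boundary carries a nondegenerate middle-dimensional intersection form; $H_1$ of a 3--manifold boundary carries no such form, and the kernel of $H_1(\partial W;\qq)\to H_1(W;\qq)$ is not constrained to be half-dimensional: the two fillings $S^1\times D^3$ and $D^2\times S^2$ of $S^1\times S^2$ have kernels of dimension $0$ and $1$, respectively. What duality actually yields is a cross-degree relation (the rank of $H_1(\widetilde{M})_\omega\to H_1(\widetilde{W})_\omega$ plus the rank of $H_2(\widetilde{M})_{\bar\omega}\to H_2(\widetilde{W})_{\bar\omega}$ equals $m_1$), and this cannot be converted into your equality.

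Moreover, your claim fails under the very hypotheses of the theorem. Take $W=(L(p,1)\setminus \operatorname{int}B^3)\times[0,1]$, so that $M=\partial W= L(p,1)\,\#\,(-L(p,1))$, the map $\pi_1(M)\to\pi_1(W)\cong\zz_p$ is onto, and the composite $\rho\co H_1(M)\to H_1(W)\cong\zz_p$ is a nontrivial character that extends over $W$. Then $\widetilde{W}$ is the product of $[0,1]$ with $S^3$ minus $p$ open balls, so $e_1=0$; but $\widetilde{M}=\#_{p-1}(S^1\times S^2)$, and $H_1(\widetilde{M};\cc)$ is the reduced regular representation of $\zz_p$, so $m_1=1$. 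Thus $e_1\ne\tfrac12 m_1$ (and the kernel is everything, so the ``kernel is half'' version fails as well). The good news is that your scheme does not need the equality, and the repair is exactly what the paper does: surjectivity gives $e_1\le m_1$ outright, and it gives more than your $e_3\le e_1$---since $H_1(\widetilde{M})_\omega\to H_1(\widetilde{W})_\omega$ is onto and $H_0(\widetilde{M})_\omega=0$, the long exact sequence of $(\widetilde{W},\widetilde{M})$ forces $H_1(\widetilde{W},\widetilde{M})_\omega=0$, whence $e_3=0$ by the (conjugation-aware) Lefschetz duality you describe. Then $e_1+e_3\le m_1$, and your Euler-characteristic count closes the proof. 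In short, surjectivity should be spent on killing $e_3$ and on the inequality $e_1\le m_1$, not on a false equality $e_1=\tfrac12 m_1$.
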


\begin{proof} Observe first that the surjectivity of $j$ implies that $\beta_1(W) = \beta_1(W,\partial W) =0$, and by duality,  $\beta_3(W) = \beta_3(W,\partial W) =0$.

 Since $\chi(W) = \beta_2(W)  +1$, we have $\chi(\widetilde{W}) = p\beta_2(W) + p$.  The surjectivity of   $\pi_1(M) \to \pi_1(W)$ implies the surjectivity of  $\pi_1(\widetilde{M}) \to \pi_1(\widetilde{W})$, so $\beta_1(\widetilde{W}) \le \beta_1(\widetilde{M})$  and from the long exact sequence, $\beta_1(\widetilde{W},\widetilde{M})=0$. It follows from duality that $\beta_3(\widetilde{W}) = 0.$  Thus, $\chi(\widetilde{W}) = 1 -\beta_1(\widetilde{W}) + \beta_2(\widetilde{W}) = p\beta_2(W) + p$, so $$\beta_2(\widetilde{W}) =   p\beta_2(W) + p +   \beta_1( \widetilde{W})   -1\le   p\beta_2(W) + p +   \beta_1( \widetilde{M})   -1.$$    

Assume now that $\rho$ is nontrivial.  The $1$--eigenspace of the $\zz_p$--action on $H_2(\widetilde{W}, \cc)$ is isomorphic to $H_2(W,\cc)$ and the remaining eigenspaces (of which there are  $(p-1)$) are mutually isomorphic, so each has the same dimension, denoted $\beta_2(W,\rho)$.  We thus have $\beta_2(W,\rho) = (\beta_2(\widetilde{W}) - \beta_2(W))/(p-1)$.  From the previous inequality, it follows that $$\beta_2(W,\rho) \le \beta_2(W) +1 +\frac{1}{p-1} \beta_1(\widetilde{M}).$$ It now follows
 that $$|\sigma(M,\rho) |\le 2\beta_2(W) + 1 + \frac{1}{p-1}\beta_1(\widetilde{M},\rho)$$ 
 as desired. Note that there is no $1$--eigenspace for the $\zz_p$--action on $H_1(\widetilde{M},\cc)$, so $\beta_1(\widetilde{M})$ is divisible by $p-1$.

In the case that $\rho=0$, the $1$--eigenspace of $H_2(\widetilde{W})$ is isometric to $H_2(W)$ with respect to the intersection forms, and thus, the difference of the signatures is 0.
\end{proof}

 \subsection{Extending homomorphisms}   To apply Theorem~\ref{cgtheorem} we need a result concerning the existence of extensions of homomorphisms.  Here is the key lemma, 
  which is related to Lemma 1 of~\cite{gi3}.  Let $\beta_1(M,\zz_p) = \dim_{\zz_p} H_1(M, \zz_p)$.
  
\begin{lemma}\label{metabolizerlemma} Suppose $M = \partial W$, $H_1(M,\qq)=0$ and  $H_1(M)\to H_1(W)$ is surjective. Then the image of the restriction map $\text{hom}(H_1(W),\zz_p) \to \text{hom}(H_1(M), \zz_p)$ is  a subspace of dimension at least $\frac{1}{2}(\beta_1(M,\zz_p) - \beta_2(W))$.    
  \end{lemma}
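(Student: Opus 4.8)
The plan is to translate the statement into an upper bound on the kernel of the mod~$p$ reduction of the inclusion, and then to control that kernel with the linking form. Write $V = H_1(M,\zz_p)$ and $r = \beta_1(M,\zz_p) = \dim_{\zz_p} V$, and let $j\co H_1(M) \to H_1(W)$ be induced by inclusion. Since $H_0$ is free, the universal coefficient theorem gives $\text{hom}(H_1(X),\zz_p) \cong H^1(X,\zz_p)$, and under this identification the restriction map in the statement is the transpose of the reduction $j_p\co H_1(M,\zz_p) \to H_1(W,\zz_p)$. Hence its image is exactly the annihilator of $L := \ker(j_p)$, of dimension $r - \dim L$. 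Thus it suffices to prove $\dim L \le \tfrac12\big(\beta_1(M,\zz_p) + \beta_2(W)\big)$.

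Because $H_1(M,\qq) = 0$, the manifold $M$ is a rational homology sphere, so Poincar\'e duality endows $V$ with a nonsingular $\zz_p$--valued linking pairing $\lambda$ (for $p=2$ this is the nonsingular $\zz_2$--intersection pairing on $H_1(M,\zz_2)$; the argument below is insensitive to the distinction). First I record a purely linear-algebraic bound: if $\lambda$ is a nonsingular bilinear form on a space $V$ of dimension $r$ and $L\subseteq V$ is any subspace with $s := \text{rank}(\lambda|_{L\times L})$, then $\dim L \le \tfrac12(r+s)$. Indeed $L\cap L^\perp \subseteq L^\perp$, and nonsingularity gives $\dim L^\perp = r - \dim L$, so $\dim(L\cap L^\perp) \le r - \dim L$; since $s = \dim L - \dim(L \cap L^\perp)$, we get $s \ge 2\dim L - r$, which rearranges to the claimed inequality.

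It therefore remains to show $s = \text{rank}(\lambda|_L) \le \beta_2(W)$. Here $L = \text{im}\big(\partial\co H_2(W,M,\zz_p)\to H_1(M,\zz_p)\big)$ by exactness, so every class of $L$ bounds a relative $2$--cycle in $W$. Evaluating $\lambda$ on such classes by intersecting their bounding relative cycles realizes $\lambda|_L$ as the form \emph{induced} by the intersection pairing of $W$; this is precisely the mechanism behind Lemma~\ref{githm} (cf.~\cite{gi3}), which presents minus the linking form of $M$ by the intersection matrix of $W$. Since $M$ is a rational homology sphere, $H_2(M,\qq)=0$, so the rational intersection form of $W$ is nondegenerate of rank exactly $\beta_2(W)$; consequently the $\zz_p$--form through which $\lambda|_L$ factors has rank at most $\beta_2(W)$, giving $s \le \beta_2(W)$. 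Combining the three steps yields $\dim L \le \tfrac12\big(r+\beta_2(W)\big)$ and hence the desired bound on the image. The surjectivity hypothesis enters only to keep the duality bookkeeping clean: it forces $H_1(W,M,\zz_p)=0$ and thus $H_3(W,\zz_p)=0$, so that $\partial$ surjects onto $L$ and the identifications above carry no correction terms.

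The main obstacle is the last step. Passing from the \emph{integral} statement ``the linking form of $M$ is presented by the intersection form of $W$'' (Lemma~\ref{githm}) to the \emph{mod~$p$} rank bound $s\le\beta_2(W)$ requires care: one must check that $\lambda|_L$ factors through the rationally nondegenerate part of the intersection form, so that its mod~$p$ rank is governed by the rational Betti number $\beta_2(W)$ and not by the potentially larger $\dim_{\zz_p} H_2(W,\zz_p)$. This is exactly where the rational-homology-sphere hypothesis on $M$ does the essential work.
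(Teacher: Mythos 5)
Your first reduction is fine: identifying the image of the restriction map with the annihilator of $L=\ker\bigl(H_1(M,\zz_p)\to H_1(W,\zz_p)\bigr)$ is correct linear algebra, and the inequality $\dim L\le\tfrac12(r+s)$ is correct \emph{provided} a nonsingular pairing $\lambda$ on $H_1(M,\zz_p)$ exists. But that is exactly where the proof breaks: for a rational homology sphere, Poincar\'e duality does \emph{not} give a nonsingular $\zz_p$--valued self-pairing on $H_1(M,\zz_p)$. What duality gives is a nonsingular pairing between $H_1(M,\zz_p)$ and $H_2(M,\zz_p)$ (equivalently, the $\qq/\zz$--valued linking form on the finite group $H_1(M)$), and this can be converted into a self-pairing on $H_1(M,\zz_p)$ only when the $p$--primary part of $H_1(M)$ is elementary abelian. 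If $H_1(M)$ has $p^2$--torsion, the induced pairing on $H_1(M)\otimes\zz_p$ is not well defined (reducing $\lk$ modulo $p$ changes values by multiples of $1/p$), and the $\qq/\zz$--valued form restricted to the $p$--torsion subgroup $H_1(M)[p]$ is identically zero: for $H_1(M)=\zz_{p^2}$ with $\lk(x,x)=a/p^2$ one gets $\lk(px,px)=a=0\in\qq/\zz$. So there is no $\lambda$ to run the argument with, and with a degenerate pairing the step $\dim(L\cap L^\perp)\le r-\dim L$ collapses, leaving no bound on $\dim L$ at all. This is not a corner case you can exclude: the paper applies the lemma precisely to $M=\#_n L(25,2)$ and $\#_n L(169,2)$ with $p=5,13$, i.e.\ to groups $(\zz_{p^2})^n$ where your pairing degenerates completely.

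There is a second, independent gap: even in the elementary abelian case your key inequality $s=\operatorname{rank}(\lambda|_L)\le\beta_2(W)$ is asserted rather than proved. $L$ is the kernel of the \emph{mod $p$} map, which is in general strictly larger than the image of the kernel of $H_1(M)\to H_1(W)$ (it contains classes whose image in $H_1(W)$ is merely divisible by $p$), whereas Lemma~\ref{githm} concerns the integral linking form and the integral intersection lattice; bridging that mismatch is the real content, and you flag it as ``the main obstacle'' without closing it. For comparison, the paper's proof avoids forms entirely: surjectivity of $H_1(M)\to H_1(W)$ gives $H_1(W,M)=0$, hence $H_2(W)$ is torsion-free of rank $h=\beta_2(W)$ and $H_2(W,\zz_p)\cong(\zz_p)^h\oplus(H_1(W)\otimes\zz_p)$; then the cohomology exact sequence $0\to\text{hom}(H_1(W),\zz_p)\to\text{hom}(H_1(M),\zz_p)\to H^2(W,M,\zz_p)\cong H_2(W,\zz_p)$ yields $n-m\le h+m$, i.e.\ $m\ge\tfrac12(n-h)$, with no linking pairing needed. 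If you want to salvage your route, you would have to work with the duality pairing between the two different spaces $H_1(M,\zz_p)$ and $H_2(M,\zz_p)$ and redo the isotropy count there --- at which point you are essentially reconstructing the paper's exact-sequence argument.
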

  
\begin{proof}  It follows immediately from the long exact sequence that $H_1(W,M) =0$.
  
  From Poincar\'e duality and the universal coefficient theorem, $$H_2(W) = H^2(W,M)= \text{hom}(H_2(W,M) , \zz)\oplus \text{Ext}(H_1(W,M),\zz).$$  Since $H_1(W,M) = 0$, the Ext term is trivial, so  we see that $H_2(W)$ is torsion-free and write $H_2(W)= \zz^h$. Switching to $\zz_p$ coefficients, $H_2(W,\zz_p) = (\zz_p)^h \oplus (H_1(W) \otimes \zz_p)$.
  
 Now consider the long exact sequence on cohomology:
 $$ H^1(W,M,\zz_p) \to  H^1(W, \zz_p) \to   H^1(M, \zz_p) \to   H^2(W,M,\zz_p). $$
 The universal coefficient theorem implies the first term  is 0. By Poincar\'e duality with $\zz_p$ coefficients, the last term is isomorphic to $H_2(W, \zz_p)$.  Thus we have the exact sequence
$$0 \to \text{hom}(H_1(W), \zz_p) \to \text{hom}(H_1(M), \zz_p) \to (\zz_p)^h \oplus (H_1(W) \otimes \zz_p ).$$
   If we write $(\zz_p)^m = \text{hom}(H_1(W), \zz_p) \cong  H_1(W) \otimes \zz_p $ and  $(\zz_p)^n  = \text{hom}(H_1(M), \zz_p)$, then this sequence becomes 
 $$0 \to (\zz_p)^m \to(\zz_p)^n   \to (\zz_p)^h \oplus(\zz_p)^m .$$
 It follows that $n - m \le h + m$, so  $m\ge (n-h)/2$.\end{proof}
   
   \section{Ribbon surfaces and obstructions; two-bridge knots}\label{thmbsection}
   
Recall that a smooth surface $F \subset B^4$ is called {\it ribbon} if the restriction to $F$ of the  radial function on $B^4$ has no critical points of index 2.  In particular, a knot $K\subset S^3$ bounds a ribbon surface of Euler characteristic $e$ if and only if some collection of $b$ band moves results in an unlink with $n$ components, where $n-b = e$.  A key property of ribbon surfaces is that the homomorphism $\pi_1(S^3-K) \to \pi_1(B^4 - F)$ is surjective if $F$ is ribbon.  It follows that for the $2$--fold branched covers, the homomorphism $\pi_1(M(K)) \to \pi_1(W(F))$ is also surjective.
 
We now consider a 2--bridge knot  $K = K_{\alpha/\beta}$, and recall that its  2--fold branched cover  is  the lens space $L(\alpha,\beta)$.   The property of lens spaces that we will be using is that for every prime $p$ dividing $\alpha$, the $p$--fold cover $\widetilde{L}$ of $L(\alpha,\beta)$ satisfies $\beta_1(\widetilde{L}) = 0$.

 Fix a prime divisor $p$ of $\alpha$; we consider the set of  $\rho\co \pi_1(L(\alpha,\beta))\to \zz_p$.  To abbreviate, we write $\sigma_{\rho}= \sigma(L,\rho)$.  Also, let $\sigma_{\max} = \max_{\rho \ne 0} \sigma_\rho    $ and $\sigma_{\min} = \min_{\rho \ne 0} \sigma_\rho    $.  If $\sigma_{\max} >0$ and $\sigma_{\min} <0$, the arithmetic becomes somewhat more complicated, with little gain in terms of examples,  so in the following theorem we restrict to the case of $    \sigma_{\min} \ge 0$.

  \begin{theorem} Let $K$ be a 2--bridge knot $K_{\alpha/\beta}$, and  let $p$ be a prime dividing $\alpha$.  Let $\sigma_{\max}$ and $\sigma_{\min}$ be as defined above and assume $   \sigma_{\min}\ge 0$.  Suppose that  $n K= \partial  F $ where $F$ is ribbon and  $h(F) = h$.  Then for some $x\ge \frac{n-h}{2}$ and some $y \le\frac{n+h}{2}$, we have $$  x   \sigma_{\max}   +y  \sigma_{\min}  \le x +y +2h.$$ 
 \end{theorem}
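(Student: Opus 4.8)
The plan is to realize the connected sum $nK$ as the boundary of a branched cover $W=W(F)$, feed the Casson--Gordon bound of Theorem~\ref{cgtheorem} to those homomorphisms that extend over $W$, and then extract the stated inequality by a short linear-algebra argument. First I would set $M=M(nK)=L(\alpha,\beta)\cs\cdots\cs L(\alpha,\beta)$ ($n$ summands), so that $M=\partial W$ with $W=W(F)$. Since $F$ is ribbon, $\pi_1(M)\to\pi_1(W)$ is surjective, and by Lemma~\ref{bettilemma} we have $\beta_2(W)=\beta_1(F)=h$. Because $p\mid\alpha$, each summand contributes a $\zz_p$ to $H_1(M,\zz_p)$, so $\beta_1(M,\zz_p)=n$. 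Lemma~\ref{metabolizerlemma} then produces a subspace $V\subseteq\hom(H_1(M),\zz_p)=(\zz_p)^n$ of homomorphisms that extend over $W$, with $\dim V\ge \tfrac12(n-h)$. I identify $\rho\in(\zz_p)^n$ with a tuple $(j_1,\dots,j_n)$, where $j_i$ names the homomorphism on the $i$th lens-space summand and $\sigma_{j_i}$ denotes the associated Casson--Gordon invariant, with $\sigma_0=0$.

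The key computation is the first Betti number of the $p$--fold cover $\widetilde M$ attached to a given $\rho\in V$. Writing $M$ as a connected sum, i.e.\ as a tree of $n$ punctured lens spaces glued along $n-1$ separating spheres, the cover $\widetilde M$ is a graph of spaces: a separating sphere has trivial $\pi_1$ and so lifts to $p$ spheres, a summand with $j_i\ne 0$ lifts to a single connected $p$--fold cover of $L(\alpha,\beta)$, and a summand with $j_i=0$ lifts to $p$ disjoint copies of $L(\alpha,\beta)$. Every piece appearing is a lens space or a $p$--fold cover of one, hence has vanishing $\beta_1$ by the quoted property of $L(\alpha,\beta)$; thus $\beta_1(\widetilde M)$ equals the cycle rank of the dual graph. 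Counting $e=p(n-1)$ edges and $v=k+p(n-k)$ vertices, where $k=\#\{i:j_i\ne0\}$, gives $\beta_1(\widetilde M)=e-v+1=(p-1)(k-1)$.

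Combining additivity of the Casson--Gordon invariant, $\sigma(M,\rho)=\sum_{i}\sigma_{j_i}$, with Theorem~\ref{cgtheorem} then yields, for every $\rho\in V$,
\[
\sum_{i:\,j_i\ne0}\sigma_{j_i}=\sigma(M,\rho)\le 2\beta_2(W)+1+\tfrac{1}{p-1}\beta_1(\widetilde M)=2h+k,
\]
where the hypothesis $\sigma_{\min}\ge0$ lets me drop the absolute value. It remains to choose $\rho\in V$ well. Let $j^\ast\ne0$ achieve $\sigma_{j^\ast}=\sigma_{\max}$ (if $\sigma_{\max}=0$ the conclusion is trivial, taking $x=y=0$). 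Putting $V$ in reduced row echelon form with $d=\dim V$ pivot columns and basis $v_1,\dots,v_d$, the vector $\rho=j^\ast\sum_{a}v_a\in V$ has entry $j^\ast$ in each pivot column, hence at least $d\ge\tfrac12(n-h)$ of its coordinates equal $j^\ast$. Let $x$ be the number of coordinates $i$ with $\sigma_{j_i}=\sigma_{\max}$ and $y$ the number of remaining nonzero coordinates, so $x\ge\tfrac{n-h}{2}$ and $x+y=k\le n$, whence $y\le n-x\le\tfrac{n+h}{2}$. Since each of the $y$ remaining nonzero coordinates has $\sigma_{j_i}\ge\sigma_{\min}$, the displayed bound gives $x\sigma_{\max}+y\sigma_{\min}\le\sum_{i:\,j_i\ne0}\sigma_{j_i}\le x+y+2h$, which is the assertion.

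I expect the main obstacle to be the covering-space computation $\beta_1(\widetilde M)=(p-1)(k-1)$: one must correctly track how the separating spheres and the two kinds of summands lift and verify that every constituent piece is a rational homology sphere, so that $\beta_1(\widetilde M)$ collapses to the cycle rank of the dual graph. The extension step via Lemma~\ref{metabolizerlemma}, additivity of the Casson--Gordon invariant, and the row-echelon selection of $\rho$ are comparatively routine.
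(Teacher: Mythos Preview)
Your proof is correct and follows the same approach as the paper: Lemma~\ref{metabolizerlemma} produces the subspace of extending characters, row-reduction manufactures a $\rho$ with at least $\tfrac{n-h}{2}$ coordinates equal to the value realizing $\sigma_{\max}$, and Theorem~\ref{cgtheorem} with $\beta_1(\widetilde M)=(p-1)(k-1)$ gives the bound; you in fact supply the graph-of-spaces computation of $\beta_1(\widetilde M)$ that the paper merely asserts. One small slip: your parenthetical disposal of the case $\sigma_{\max}=0$ via $x=y=0$ does not meet the constraint $x\ge\tfrac{n-h}{2}$, but this is harmless since the main argument already covers that case (any $j^\ast\ne 0$ will do, as $\sigma_{\max}$ is by definition attained at some nonzero character).
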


  \begin{proof}

  Let $M = nL$ be the $2$--fold branched cover of $S^3$ branched over $nK$.  Then $H_1(M) =(\zz_\alpha)^n$.  By Lemma~\ref{metabolizerlemma} there is a
   subspace $H$ of $\hom( H_1(M), \zz_p) \cong (\zz_p)^n$ of dimension $\frac{n-h}{2}$ with the property that for all $\rho \in  H$, there is an extension of $\rho$ to
    the $2$--fold branched cover of $B^4$ over $F$.   
  
It follows from Theorem~\ref{cgtheorem} that for nontrivial $\rho \in H$, $$ |\sigma(nK, \rho) | \le 2\beta_2(W) + 1 + \frac{1}{p-1}\beta_1(\widetilde{M}).$$

Form a   $\zz_p$ matrix of size $\frac{n-h}{2} \times n$ using as rows  a set of basis vectors for $H$.  After swapping columns and performing row operations, the first $\frac{n-h}{2} \times \frac{n-h}{2}$ block in that matrix can be made the identity.  Adding the rows yields a vector in $H$ with at least $\frac{n-h}{2}$ entries of 1.   Suppose that exactly $x$ of the entries are 1 and that $y$ other entries are nonzero.  

The value $\sigma_{\max}$ is realized by a character $\rho \in $hom$(H_1(L), \zz_p) \cong \zz_p$  that corresponds to an element $a \in \zz_p$.
Multiply the vector in $H$ constructed above (having $x$ entries 1 and $y$ other entries nonzero) by $a$.  Call the corresponding character $\rho'$.   Then for the associated cover $\widetilde{M}$ we have $\beta_1(\widetilde{M}) = (p-1)(x+y-1)$.  We have already seen that $\beta_2(W) = h$.  Thus, the previous inequality becomes for this particular character
$$|\sigma(nK, \rho') | \le 2h +    x+y .$$ (The additivity of $\sigma(M,\rho)$ under connected sums follows immediately from its definition.) However,  $x \sigma_{\max} + y\sigma_{\min} \le \sigma(nK, \rho')$, completing the proof of the theorem. 
\end{proof}
  
  \begin{corollary}\label{boundcor} If $K$ is a 2--bridge knot and $  \sigma_{\min} \ge 1$, then $$h^r(K) \ge \left(\frac{\sigma_{\max} -1}{\sigma_{\max} +3} \right)n.$$
   \end{corollary}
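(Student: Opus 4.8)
The plan is to treat this as a purely algebraic consequence of the preceding theorem, so the real content has already been established and what remains is a sign-careful manipulation. First I would let $F$ be a ribbon, nonorientable surface with $nK = \partial F$ realizing the minimum, so $h(F) = h = h^r(nK)$, and apply the theorem to produce integers $x \ge \frac{n-h}{2}$ and $y \le \frac{n+h}{2}$ satisfying
$$ x\,\sigma_{\max} + y\,\sigma_{\min} \le x + y + 2h. $$
Recalling from the proof of the theorem that $x$ counts entries equal to $1$ and $y$ counts the remaining nonzero entries of a vector in the extension subspace, both $x$ and $y$ are nonnegative. Rearranging the displayed inequality gives
$$ x(\sigma_{\max}-1) + y(\sigma_{\min}-1) \le 2h. $$

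The key step is to eliminate the $y$ term. Since the hypothesis is $\sigma_{\min} \ge 1$, the factor $\sigma_{\min}-1$ is nonnegative, and as $y \ge 0$ the whole term $y(\sigma_{\min}-1)$ is nonnegative; dropping it can only shrink the left side, yielding $x(\sigma_{\max}-1) \le 2h$. Next I would feed in the lower bound on $x$: because $\sigma_{\max} \ge \sigma_{\min} \ge 1$ we have $\sigma_{\max}-1 \ge 0$, so multiplying $x \ge \frac{n-h}{2}$ by this nonnegative factor preserves the inequality and gives $\frac{n-h}{2}(\sigma_{\max}-1) \le x(\sigma_{\max}-1) \le 2h$. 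Clearing denominators produces $(n-h)(\sigma_{\max}-1) \le 4h$, and collecting the $h$ terms rearranges to $n(\sigma_{\max}-1) \le h(\sigma_{\max}+3)$, hence
$$ h^r(nK) = h \ge \left(\frac{\sigma_{\max}-1}{\sigma_{\max}+3}\right) n, $$
which is the claim.

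There is no serious obstacle here; the only things demanding care are the sign conditions that justify each inequality, and these are exactly why the hypothesis $\sigma_{\min} \ge 1$ is imposed. I would explicitly note that $\sigma_{\max} \ge \sigma_{\min}$ guarantees $\sigma_{\max}-1 \ge 0$, and that $y \ge 0$ together with $\sigma_{\min} - 1 \ge 0$ is what licenses discarding the $y$ term in the correct direction. The two degenerate cases are harmless: if $\sigma_{\max}=1$ the asserted bound is vacuous, and if $h \ge n$ the bound holds trivially because the coefficient $\frac{\sigma_{\max}-1}{\sigma_{\max}+3}$ is strictly less than $1$. I would also flag that the statement is really a bound on $h^r(nK)$ (the surface $F$ bounds $nK$, not $K$), matching the use of these estimates in the proof of Theorem C, where letting $n \to \infty$ with $\sigma_{\max} > 1$ drives $h^r(nK)$ to infinity.
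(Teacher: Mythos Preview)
Your proof is correct and follows essentially the same route as the paper's: rearrange the theorem's inequality to $2h \ge x(\sigma_{\max}-1) + y(\sigma_{\min}-1)$, drop the nonnegative $y$ term, replace $x$ by its lower bound $\frac{n-h}{2}$, and solve for $h$. The paper compresses these steps into a single displayed chain of inequalities, whereas you spell out the sign justifications explicitly; you also correctly observe that the bound is on $h^r(nK)$ rather than $h^r(K)$ as written in the statement.
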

  
  \begin{proof} From the theorem we have
  $$2h \ge x (\sigma_{\max} -1) + y(\sigma_{\min} -1)\ge \left( \frac{n-h}{2}\right)(\sigma_{\max} -1).$$  The corollary follows immediately.
  
  \end{proof}
             \noindent{\bf Example}  Consider the case of the 2--bridge knot $K = K_{25/2}$.  This is the first example of an algebraically slice knot that is not slice, the 6--twisted double of the unknot. The 2--fold branched cover is $L(25,2)$, and according to~\cite{cg}, the two nontrivial representations to $\zz_5$ yield values $\sigma_{\max} = 5$ and $\sigma_{\min} = 3$. Here (and below) we have  changed the sign as our convention for the orientation of lens spaces is opposite to that used in ~\cite{cg}. Thus, according to the corollary,  $ h \ge  \frac{1}{2}  n.$ \vskip.1in
             
             In general, the bound on $h^r(nK)$ given by Corollary~\ref{boundcor} is significantly less than $n$.  However, the fact that $h^r $ is an integer leads to stronger bounds in some cases.  The following corollary provides examples.

       \begin{corollary}\label{boundcor2}Let $K$ be a 2--bridge knot satisfying $  \sigma_{\min} \ge 1$.  If  $n < \frac{\sigma_{\max}  +3}{4}$ then $h^r(nK) \ge n$.
   \end{corollary}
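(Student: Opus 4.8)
The plan is to derive this directly from Corollary~\ref{boundcor}, together with the observation, noted in the preceding paragraph, that $h^r(nK)$ is an integer. Corollary~\ref{boundcor} supplies the real-number lower bound
$$h^r(nK) \ge \left(\frac{\sigma_{\max}-1}{\sigma_{\max}+3}\right)n,$$
so it suffices to show that the hypothesis $n < \frac{\sigma_{\max}+3}{4}$ forces the right-hand side to exceed $n-1$. Once the strict inequality $h^r(nK) > n-1$ is in hand, integrality of $h^r(nK)$ immediately upgrades it to $h^r(nK)\ge n$.

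The core of the argument is an elementary manipulation. Since $\sigma_{\max}\ge\sigma_{\min}\ge 1$, the quantity $\sigma_{\max}+3$ is positive, so I can clear the denominator without reversing any inequality. I would check that
$$\frac{(\sigma_{\max}-1)n}{\sigma_{\max}+3} > n-1$$
is, after multiplying through to get $(\sigma_{\max}-1)n > (n-1)(\sigma_{\max}+3)$, expanding both sides, and cancelling the common term $n\sigma_{\max}$, precisely equivalent to $\sigma_{\max}+3 > 4n$, that is, to the hypothesis $n < \frac{\sigma_{\max}+3}{4}$.

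Combining these two steps, under the standing hypothesis one obtains $h^r(nK) > n-1$, and since $h^r(nK)$ and $n$ are integers this yields $h^r(nK)\ge n$, as claimed. I expect no genuine obstacle: all of the geometric and Casson--Gordon content is already packaged in Corollary~\ref{boundcor}, and this statement is merely the sharpening obtained by rounding up. The only points requiring care are keeping the direction of the inequality correct when clearing the positive denominator, and invoking integrality only \emph{after} the strict real inequality has been established.
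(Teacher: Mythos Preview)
Your proposal is correct and follows essentially the same approach as the paper: both invoke Corollary~\ref{boundcor}, observe that integrality of $h^r(nK)$ upgrades $h^r(nK)>n-1$ to $h^r(nK)\ge n$, and then verify that the hypothesis $n<\frac{\sigma_{\max}+3}{4}$ is equivalent to $\left(\frac{\sigma_{\max}-1}{\sigma_{\max}+3}\right)n>n-1$. You have simply spelled out the ``simple algebra'' that the paper leaves to the reader.
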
 
    \begin{proof} Using the integrality of $h^r(nK)$, we have   $h^r(nK) \ge n$ if $h^r(nK) > n-1$.  This will be the case if $$\left(\frac{\sigma_{\max} -1}{\sigma_{\max} +3} \right)n > n-1.$$ The corollary follows from simple algebra.
    \end{proof}
    
         \noindent{\bf Example}  Consider the case of the 2--bridge knot $K = K_{13^2/2}$.  For this knot the formula of~\cite{cg} gives    $\sigma_{\max} = 83$ and $\sigma_{\min} = 23$. Thus, according to the corollary,  if $n <  21$, then $h^r(nK) \ge n$.\vskip.1in
             
\section{The topological category}\label{topsection}

A topological locally flat surface $F \subset B^4$ has  a   disk bundle  neighborhood~\cite{freedman-quinn}.  It follows that the 2--fold branched cover $W(F)$ exists and is a manifold.  Our argument that $\chi(W(F)) =   2 - \chi(F)$ was based on $F$ being a subcomplex of a triangulation of $B^4$, which might not be the case.  Instead one can decompose $B^4$ as $N(F) \cup (B^4 - F)$, where $N$ is a disk bundle neighborhood of $F$.  A Mayer-Vietoris argument computes the Euler characteristic of the cover.  The Euler characteristic of the branched cover of $N(F)$ equals the Euler characteristic of $F$.  On $B^4 - F$ we are considering a regular cover, so the Euler characteristic multiplies.  The intersection deformation retracts to a circle bundle, and thus has trivial Euler characteristic. 
It  follows that Theorem \ref{mythm},
 Corollary~\ref{theoremmobius} and Theorem~\ref{theoremklein}  apply in this category.  Furthermore, the Casson-Gordon theory extends to this setting, so Theorem B  also holds in the topological category.  (In the topological locally flat setting, the condition that the surface be ribbon is replaced with the condition that it be {\it homotopy ribbon}; that is, $\pi_1(S^3 - K) \to \pi_1(S^4 - F)$ is surjective.)

The distinction between the two categories is revealed by Theorem D:\vskip.1in

\noindent{\bf Theorem D} {\it There exists a knot $K$ that bounds a topological Mobius band in $B^4$, but does not bound a smooth Mobius band.}\vskip.1in

\begin{proof} Consider a knot $K $ for which $H_1(M(K)) = \zz_3$ and for which the linking form on $H_1(M)$ is 
represented 
 by the $1 \times 1$ matrix 
$(\frac{1}{3})$.
It follows that if $K$ bounds a Mobius band $F$ (in either category) 
then  $\beta_2(W(F)) = 1$ and $W(F)$ is negative definite.  By the  arguments of subsection \ref{dnbnd}, forming the connected sum of $K$ with some Alexander polynomial one knot $J$ yields a 
knot that does not bound a {\em smooth}
Mobius band.  On the other hand, since $J$ is topologically slice, if $K$ bounded a Mobius band,
 then $K \cs J$ would also bound a Mobius band topologically.  The simplest example can be built from the left-handed trefoil, $-3_1$.

\end{proof}

\appendix

\section{Algebraic theory of linking forms} \label{alglK} 
A {\it  linking form} $(G, \beta)$ consists of a  finite abelian group $G$ and a nonsingular symmetric bilinear pairing $\beta\co\! G \times G \to \qq/\zz$.  Nonsingular means that the map $x \to 
\beta
(x, \cdot)$ defines an isomorphism $G \to \text{hom}(G, \qq/\zz)$. Any linking form splits as a direct sum over the $p$--primary summands of $G$.

The applications in this paper  do not require us to consider linking forms on groups of even order. We make the simplifying assumption that $G$ has odd order, and thus we may assume that $p$ is odd.

In the case that $G = \ff_p^n$, where $\ff_p$ is the field with $p$--elements, $p$ prime, we can view the linking form as taking values in $\ff_p$ by mapping the rational number $\frac{a}{p}$ to $a \in \ff_p$.  A choice of basis yields a  matrix representation of the linking form; this is a nonsingular symmetric square matrix  $Q$ with entries in $\ff_p$.  The discriminant of $\beta$, disc($\beta$), is defined in terms of the determinant of $Q$: disc$(\beta)= (-1)^{n(n-1)/2}\det(Q) \in \ff_p^* / (\ff_p^*)^2$.  Quotienting by squares  ensures that  the discriminant depends only on the isometry class of $\beta$ and not the choice of basis.    If $\beta$ is {\it metabolic} (that is, if there is a half-dimensional summand on which $\beta$ vanishes), then $\text{disc}(\beta) = 1$.  More generally, using \cite[Chapter IV Lemma 1.5]{mh} (or see~\cite[Theorem~B.5]{liv2}) it follows  that:

\begin{theorem}\label{witt} A nonsingular bilinear form $\beta$ on $\ff_p^n$ with $p$ odd is metabolic if and only if $n$ is even and disc$(\beta)  = 1 \in  \ff_p^* / (\ff_p^*)^2$.
\end{theorem}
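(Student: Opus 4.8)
The plan is to derive the theorem from the classification of nonsingular symmetric bilinear forms over $\ff_p$ with $p$ odd: any such form is determined up to isometry by its rank $n$ together with its discriminant in $\ff_p^* / (\ff_p^*)^2$, a group of order two. This classification is exactly the content of \cite[Chapter IV Lemma 1.5]{mh}; it follows from the fact that every such form diagonalizes and that $\langle a \rangle \oplus \langle b\rangle$ depends, up to isometry, only on the product $ab$ modulo squares. I would record this as a lemma and then treat the two implications separately.

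For the forward implication, suppose $\beta$ is metabolic, so there is a subspace $L \subset \ff_p^n$ with $\beta|_L = 0$ and $\dim L = n/2$; in particular $n = 2m$ is even. Choosing a basis of $\ff_p^n$ whose first $m$ vectors span $L$, the Gram matrix $Q$ of $\beta$ takes the block form $\left(\begin{smallmatrix} 0 & B \\ B^t & C\end{smallmatrix}\right)$, where the upper-left block vanishes because $L$ is isotropic and $B$ is invertible because $\beta$ is nonsingular. A block-column swap gives $\det\left(\begin{smallmatrix} 0 & B \\ B^t & C\end{smallmatrix}\right) = (-1)^m \det(B)^2$. Substituting this into disc$(\beta) = (-1)^{n(n-1)/2}\det(Q)$ and simplifying the resulting sign (using $n = 2m$) yields disc$(\beta) = \det(B)^2$, which is a square and hence trivial in $\ff_p^* / (\ff_p^*)^2$.

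For the converse, I would observe that the orthogonal sum $mH$ of $m = n/2$ hyperbolic planes $H = \left(\begin{smallmatrix}0&1\\1&0\end{smallmatrix}\right)$ is metabolic, with metabolizer the span of the first basis vector of each plane; it has rank $n$ and, by the same determinant computation, discriminant $1$. Now if $\beta$ has even rank $n$ and disc$(\beta) = 1$, then $\beta$ and $mH$ share both of the invariants appearing in the classification lemma, so they are isometric, and therefore $\beta$ is metabolic as well.

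The substantive input is the classification lemma; once it is in hand, the remainder is elementary linear algebra over $\ff_p$. The only step demanding genuine care is the sign bookkeeping in the determinant identity $\det\left(\begin{smallmatrix}0&B\\B^t&C\end{smallmatrix}\right) = (-1)^m\det(B)^2$ and its interaction with the normalizing factor $(-1)^{n(n-1)/2}$ in the definition of the discriminant. After setting $n = 2m$, both collapse to powers of $(-1)$ with even exponent, which is what makes the discriminant come out a perfect square in the forward direction and equal to $1$ for $mH$ in the converse.
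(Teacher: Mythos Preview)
Your proposal is correct and matches the paper's approach: the paper does not give an independent proof but simply invokes \cite[Chapter IV Lemma 1.5]{mh} (the classification of nonsingular symmetric bilinear forms over $\ff_p$ by rank and discriminant), which is exactly the input you use. Your write-up supplies the explicit determinant computation and the comparison with $mH$ that the paper leaves implicit, and your sign bookkeeping ($m(2m-1)+m = 2m^2$ even) is accurate.
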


Suppose we have a nonsingular bilinear  form $b$ on $L$, a free $\zz$--module of rank  $n$. Then there is a dual lattice $L^{\#} 
= \{ v \in L \otimes \qq\  | \ b(v,w) \in \zz, \ \forall \ w \in L\}$. The form $b$ extends to a rational valued form on  $L \otimes \qq$ which then restricts to a rational valued form on $L^{\#}.$  The quotient  $L^{\#} /L$ is a finite abelian group. If one picks a basis for $L$,   $b$ is given by  a symmetric nonsingular integer matrix  $A$  over $\zz$. One has that $A$ is a presentation matrix for $L^{\#} /L$. There is a linking form defined on $L^{\#} /L$ by $\beta (\bar x,\bar y)= b(x,y)  \pmod{\zz} $. In this situation, $A$ defines a  1--1  map   sending $L^{\#}$ to $L$ and $L$ to $A(L)$.  Thus we have an isomorphism to $L^{\#} /L \approx L/A(L) $.  When $\beta$ is transfered by this isomorphism  to  $L/A(L)$, we have $\beta (\bar w,\bar z)= w^t A^{-1} z \pmod{\zz} $. We say $b$ or $A$ {\it presents} $\beta,$ and write $\beta_A$ for the linking form presented by $A.$ The linking form $\beta_A$, having values in $\qq/\zz$, has matrix representation given by    $A^{-1}$.

\section{Quadratic Linking forms} \label{sectionquadratic}
We continue to let $G$ be a finite abelian group of odd order. 
A quadratic linking form on 
$G$ is a function $q:G \to \qq/\zz$ such that  $q(-g)=q(g)$ holds for all $g \in G$ and $q(g+h)-q(g)-q(h)$ defines a linking form on $G$.
If $q$ is a quadratic form on $G$, let $\beta_q$ denote the linking form defined by $\beta_q(g,h)= q(g+h)-q(g)-q(h)$.
If $q$ and $q'$ are quadratic linking forms, and $\beta_q=\beta_{q'}$, then $q-q'$ is a linear map to $\{0,\frac 12 \}\subset \qq/\zz$.
In this case, $q=q'$ as $G$ has odd order.

Suppose that $\beta$ is a linking form on  $G$.  Let $2^*$ denote a multiplicative inverse to  $2$ in $G$.  Define a quadratic form $q_\beta$ on $G$ by 
$q_\beta(g) = \beta(2^* g,g)$. 
 One has that $\beta(g,h) =  q_\beta(g+h)-q_\beta(g)-q_\beta(h)$. 
Thus $\beta = \beta_{q_\beta}$. 
This defines a bijection between the set of linking forms and the set of quadratic linking forms on a given odd order group $G$.

 \section{Gauss Sums}\label{GS}
Associated to the quadratic form 
$q$, we have the Gauss  sum considered in \cite{taylor}  $$\Gamma(q) = \frac{1}{\sqrt{|G|} }\sum_{g \in G} \exp(2 \pi i q(g)).$$   The sum is necessarily a fourth root of unity.   
We let  $\begin{pmatrix}  \alpha \\ p  \end{pmatrix}$ denote the  Legendre 
symbol which is $\pm 1 $ depending on whether $a$ is or is not a quadratic residue modulo an odd prime $p$.
Taylor proves the following result.

\begin{lemma}\label{ta} Let $q(p,n,a)$ denote the quadratic form on $\zz_{p^n}$ which sends a generator to $a$.
\[
\Gamma(q(p,n,a))
=  \begin{cases}
 1 &\text{if $n$ is even },\\
\begin{pmatrix} a\\ p  \end{pmatrix}& \text{if $p=1 \pmod 4$ and  $n$ is odd}\\
 i \begin{pmatrix}a\\ p  \end{pmatrix} & \text{if $p=3 \pmod 4$ and  $n$ is odd}.
\end{cases}\]
\end{lemma}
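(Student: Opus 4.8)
The plan is to make the quadratic form completely explicit, thereby reducing $\Gamma(q(p,n,a))$ to a classical quadratic Gauss sum with prime-power modulus, and then to quote (or re-derive by a short recursion) the standard evaluation of such sums. The only genuinely substantial ingredient is that evaluation; everything else is bookkeeping.

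First I would fix a generator $g$ of $\zz_{p^n}$ and read ``sends a generator to $a$'' as the normalization $q(g)=a/p^n\in\qq/\zz$, where $\gcd(a,p)=1$ by nonsingularity. I claim that then $q(kg)=ak^2/p^n$. Indeed, the function $kg\mapsto ak^2/p^n$ is a quadratic linking form: it is invariant under $k\mapsto -k \bmod p^n$, and its associated pairing $q((k+l)g)-q(kg)-q(lg)=2akl/p^n$ is bilinear and nonsingular. It takes the value $a/p^n$ on $g$, and by the bijection of Appendix~\ref{sectionquadratic} a quadratic linking form on an odd cyclic group is determined by its value on a generator: from $q(g)=q_{\beta_q}(g)=\beta_q(2^*g,g)=2^*\beta_q(g,g)$ one recovers $\beta_q(g,g)$, hence $\beta_q$, hence $q$. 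Thus this form is exactly $q(p,n,a)$, and substituting into the definition of the Gauss sum gives
\[
\Gamma(q(p,n,a))=\frac{1}{\sqrt{p^n}}\sum_{k=0}^{p^n-1}\exp\!\left(\frac{2\pi i\,a k^2}{p^n}\right),
\]
the normalized quadratic Gauss sum attached to $a$ modulo $p^n$.

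Second I would invoke the classical evaluation: for an odd modulus $m$ with $\gcd(c,m)=1$,
\[
\sum_{k=0}^{m-1}\exp\!\left(\frac{2\pi i\,ck^2}{m}\right)=\left(\frac{c}{m}\right)\varepsilon_m\sqrt{m},\qquad
\varepsilon_m=\begin{cases}1 & m\equiv 1 \pmod 4,\\ i & m\equiv 3 \pmod 4,\end{cases}
\]
where $\left(\frac{c}{m}\right)$ is the Jacobi symbol. For $m=p^n$ this may be cited directly, or reduced to the prime case by isolating the terms with $p\mid k$, which reassemble into $p$ times a Gauss sum of the same shape modulo $p^{n-2}$ (the terms with $p\nmid k$ summing to zero for $n\ge 2$); this gives the two-step recursion $G(c;p^n)=p\,G(c;p^{n-2})$ with base cases $G(c;1)=1$ and $G(c;p)=\left(\frac{c}{p}\right)\varepsilon_p\sqrt{p}$.

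Third I would match the three cases using multiplicativity of the Jacobi symbol, $\left(\frac{a}{p^n}\right)=\left(\frac{a}{p}\right)^n$, together with the value of $\varepsilon_{p^n}$ read off from $p^n\bmod 4$. When $n$ is even, $\left(\frac{a}{p}\right)^n=1$ and $p^n\equiv 1\pmod 4$, so $\Gamma=1$; when $n$ is odd, $\left(\frac{a}{p}\right)^n=\left(\frac{a}{p}\right)$ while $\varepsilon_{p^n}$ equals $1$ or $i$ according as $p\equiv 1$ or $3\pmod 4$, producing the remaining two lines. The main obstacle is entirely contained in the second step: the determination of the phase $\varepsilon_m$ — classically, Gauss's determination of the sign of the quadratic Gauss sum — is the one nontrivial point, and since the result is attributed to Taylor I would in practice cite this classical computation rather than reprove it.
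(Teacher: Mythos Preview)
Your proof is correct. The paper, however, does not prove this lemma at all: it simply states the result and attributes it to Taylor~\cite{taylor}, so there is no argument in the paper to compare against beyond a citation.

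What you have supplied is a self-contained derivation: you make the quadratic form explicit as $q(kg)=ak^2/p^n$, reduce $\Gamma(q(p,n,a))$ to the normalized quadratic Gauss sum modulo $p^n$, and then evaluate it either by quoting the standard formula with the Jacobi symbol and the phase $\varepsilon_m$, or by the two-step recursion $G(a;p^n)=p\,G(a;p^{n-2})$ coming from splitting off the terms with $p\mid k$. Both routes are sound (your observation that the terms with $p\nmid k$ cancel for $n\ge 2$ is correct, via the substitution $k\mapsto k+p^{n-1}m$), and your case analysis at the end matches the stated formula. You are also right that the only genuinely deep input is Gauss's determination of the sign $\varepsilon_p$; everything else is elementary. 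In short, you have gone beyond the paper's treatment by actually writing down a proof rather than citing one.
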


It will be convenient to have a formula for this Gauss sum as it depends on the associated linking form, so we define 
 $$\Lambda(\beta)= \Gamma(q_\beta).$$
 This satisfies $\Lambda(\beta_1 \oplus \beta_2) =\Lambda(\beta_1  )\Lambda(  \beta_2)$.
 The next theorem follows from a result of 
Milgram \cite[Appendix 4]{mh}.

\begin{theorem}\label{Mil}  If $A$ is a square symmetric integral  matrix with even numbers on the diagonal and odd determinant, then  $$\Lambda({\beta_A})=  \exp\left(2 \pi i \frac{\Sign(A)}{8} \right) .$$
\end{theorem}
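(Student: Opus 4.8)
The plan is to show that both sides of the claimed identity are multiplicative under orthogonal direct sum and then reduce to the cyclic prime-power pieces already handled by Taylor's Lemma~\ref{ta}. For the left side, the multiplicativity $\Lambda(\beta_1 \oplus \beta_2) = \Lambda(\beta_1)\Lambda(\beta_2)$ is recorded just above the statement; for the right side, $\Sign(A_1 \oplus A_2) = \Sign(A_1) + \Sign(A_2)$ gives $\exp(2\pi i \Sign(A_1\oplus A_2)/8) = \exp(2\pi i \Sign(A_1)/8)\exp(2\pi i \Sign(A_2)/8)$. Thus if $A$ is an orthogonal sum of blocks, it suffices to verify the formula block by block. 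First I would observe that, since $\det A$ is odd, the discriminant group $G = \zz^n/A\zz^n$ has odd order, so by the classification of linking forms on odd-order groups (Appendix~\ref{GS}; Wall~\cite{wall}) the linking form $\beta_A$ splits as an orthogonal sum $\bigoplus_i \ell(p_i,n_i,a_i)$, and correspondingly $q_{\beta_A}$ splits into the pieces $q(p_i,n_i,a_i)$. Multiplicativity of $\Gamma = \Lambda$ then gives $\Lambda(\beta_A) = \prod_i \Gamma(q(p_i,n_i,a_i))$, and each factor is the explicit fourth root of unity computed in Lemma~\ref{ta}.

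It remains to identify this product with $\exp(2\pi i \Sign(A)/8)$; equivalently, to show that $\Sign(A)\bmod 8$ is read off correctly from the local data $(p_i,n_i,a_i)$. This is the arithmetic heart of the statement and is exactly the reciprocity for Gauss sums established by Milgram~\cite[Appendix 4]{mh} (a form of van der Blij's lemma). Concretely, presenting $\beta_A$ on $L/A(L)$ so that $q_{\beta_A}(\bar w) = 2^* w^t A^{-1} w \bmod \zz$, the quantity $\Lambda(\beta_A)$ is the normalized Gauss sum $|\det A|^{-1/2}\sum_{w} \exp(2\pi i\, 2^* w^t A^{-1}w)$, and Milgram's reciprocity evaluates precisely such a sum as $\exp(2\pi i \Sign(A)/8)$.

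Two points require care in carrying this out. First, because a general even lattice with odd determinant need not split over $\zz$ into blocks of rank at most two, one should either apply Milgram's formula directly to $A$ (my preferred route) or first stabilize $A$ by orthogonal sums of the hyperbolic plane and of $E_8$; these have trivial discriminant form and signatures $0$ and $8 \equiv 0 \bmod 8$, so they alter neither $\Lambda(\beta_A)$ nor $\exp(2\pi i \Sign(A)/8)$, while allowing a reduction to the classified pieces. Second, one must use the odd-order normalization of the quadratic refinement consistently: the form is $q_\beta(g) = \beta(2^* g,g)$ from Appendix~\ref{sectionquadratic}, where $2^*$ is the integer inverse of $2$ modulo the exponent of $G$ (legitimate since $|G|$ is odd), and it is this normalization, rather than a literal $\tfrac12$ of the self-pairing, that matches Lemma~\ref{ta}. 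I expect the main obstacle to be precisely the bookkeeping in the Gauss-sum reciprocity step — matching the signs and powers of $i$ produced by Lemma~\ref{ta} across all primary summands against the single eighth root of unity $\exp(2\pi i \Sign(A)/8)$ — while the multiplicative reduction and the realization of the cyclic pieces are routine.
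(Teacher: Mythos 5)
Your proposal is correct and, at its core, coincides with the paper's own argument: the paper proves Theorem~\ref{Mil} simply by citing Milgram's Gauss-sum reciprocity \cite[Appendix 4]{mh} applied directly to the even lattice $A$ (your ``preferred route''), with the $2^*$-normalization issue you flag being settled by the uniqueness of quadratic refinements of a linking form on an odd-order group established in Appendix~\ref{sectionquadratic}, since evenness of $A$ and oddness of $\det A$ make Milgram's half-self-pairing a well-defined refinement that must therefore equal $q_{\beta_A}$. Your preliminary reduction via multiplicativity and Taylor's Lemma~\ref{ta} is harmless but superfluous, since that route's ``arithmetic heart'' is, as you note, the very same appeal to Milgram.
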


Let $p$ denote an odd prime. Let $\ell(p,n,a)$ denote  the form on ${\zz}_{p^n}$ given by $\ell(p,n,a)(1,1)=a/p^n.$   Notice that $q_{\ell(p,n,a)} = q(p,n,a/2)$.
Every linking form on an odd order group can be written as a direct sum of such forms \cite{wall}. 
For an odd prime $p$, 2 is a quadratic residue modulo $p$ if and only if $ p \equiv \pm 1 \pmod{8}
$.  Using Lemma~\ref{ta} we have:
\begin{lemma}\label{La}
\[
\Lambda(\ell(p,n,a))
=  \begin{cases}
 1 &\text{if $n$ is even },\\
\begin{pmatrix} a\\ p  \end{pmatrix}& \text{if $p=1 \pmod 8$ and  $n$ is odd}\\
 - i \begin{pmatrix}a\\ p  \end{pmatrix} & \text{if $p=3 \pmod 8$ and  $n$ is odd},\\
 -\begin{pmatrix}a\\ p  \end{pmatrix} & \text{if $p=5 \pmod 8$ and  $n$ is odd},\\
i \begin{pmatrix}a\\ p  \end{pmatrix} & \text{if $p=7 \pmod 8$ and  $n$ is odd}.\\
\end{cases}\]
\end{lemma}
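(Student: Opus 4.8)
The plan is to deduce Lemma~\ref{La} directly from Lemma~\ref{ta}, using the relation $q_{\ell(p,n,a)} = q(p,n,a/2)$ recorded just above the statement. Since $\Lambda(\ell(p,n,a)) = \Gamma(q_{\ell(p,n,a)})$ by the definition of $\Lambda$, this relation immediately gives
\[
\Lambda(\ell(p,n,a)) = \Gamma(q(p,n,a/2)),
\]
where $a/2$ is understood as $a \cdot 2^*$, the product of $a$ with a multiplicative inverse $2^*$ of $2$. First I would apply Lemma~\ref{ta} with the parameter $a/2$ in place of $a$, which already disposes of the $n$ even case: there $\Gamma(q(p,n,a/2)) = 1$ regardless of the argument, matching the first row of Lemma~\ref{La}.

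The only new arithmetic input is the behaviour of the Legendre symbol under division by $2$. I would record that $\begin{pmatrix} a/2 \\ p \end{pmatrix} = \begin{pmatrix} a \\ p \end{pmatrix}\begin{pmatrix} 2^* \\ p \end{pmatrix}$ by multiplicativity, and that $\begin{pmatrix} 2^* \\ p \end{pmatrix} = \begin{pmatrix} 2 \\ p \end{pmatrix}$, since the Legendre symbols of an element and its inverse coincide (both are $\pm 1$ and mutually reciprocal, as $\begin{pmatrix} 2 \\ p \end{pmatrix}\begin{pmatrix} 2^* \\ p \end{pmatrix} = \begin{pmatrix} 1 \\ p \end{pmatrix} = 1$). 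Hence $\begin{pmatrix} a/2 \\ p \end{pmatrix} = \begin{pmatrix} 2 \\ p \end{pmatrix}\begin{pmatrix} a \\ p \end{pmatrix}$, so the effect of passing from $q(p,n,a)$ to $q(p,n,a/2)$ is exactly the insertion of the factor $\begin{pmatrix} 2 \\ p \end{pmatrix}$ in the odd-$n$ rows of Lemma~\ref{ta}.

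Finally I would substitute the value of $\begin{pmatrix} 2 \\ p \end{pmatrix}$ supplied by the supplementary law noted in the text: $2$ is a quadratic residue modulo $p$ precisely when $p \equiv \pm 1 \pmod 8$. Thus $\begin{pmatrix} 2 \\ p \end{pmatrix} = 1$ for $p \equiv 1, 7 \pmod 8$ and $\begin{pmatrix} 2 \\ p \end{pmatrix} = -1$ for $p \equiv 3, 5 \pmod 8$. Splitting the two odd-$n$ cases of Lemma~\ref{ta}, namely $p \equiv 1 \pmod 4$ (yielding $\Gamma = \begin{pmatrix} a/2 \\ p \end{pmatrix}$) and $p \equiv 3 \pmod 4$ (yielding $\Gamma = i\begin{pmatrix} a/2 \\ p \end{pmatrix}$), into the corresponding residues modulo $8$ and inserting $\begin{pmatrix} 2 \\ p \end{pmatrix}$ reproduces the four odd-$n$ rows of Lemma~\ref{La} verbatim. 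There is no substantive obstacle here; the computation is routine bookkeeping, and the only point demanding care is the correct identification of the parameter $a/2 = a\cdot 2^*$ and the resulting factor $\begin{pmatrix} 2 \\ p \end{pmatrix}$.
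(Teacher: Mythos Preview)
Your proof is correct and follows exactly the route the paper indicates: the paper simply writes ``Using Lemma~\ref{ta} we have'' after recording $q_{\ell(p,n,a)} = q(p,n,a/2)$ and the supplementary law for $\begin{pmatrix} 2 \\ p \end{pmatrix}$, and you have spelled out precisely this computation. The only content beyond Lemma~\ref{ta} is the factor $\begin{pmatrix} 2 \\ p \end{pmatrix}$ coming from the $a \mapsto a/2$ shift, which you handle correctly via multiplicativity and the mod~$8$ case split.
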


\section{Intersection forms of 4--manifolds}

For any finitely generated abelian group $G$, we write $\bar{G} = G/\text{Torsion}(G)$.   For instance, $\bar{H}^2(X)$ will denote the quotient of the second cohomology of a space by its torsion subgroup.

For a compact $4$--manifold $W$,  there is a symmetric pairing induced by the cup product and evaluation on the fundamental class:  $\bar{H}^2(W) \times \bar{H}^2(W,\partial W) \to \zz$.
 Let $\iota: (W,\emptyset) \rightarrow (W, \partial W)$ denote the inclusion. 
 The map 
 $\iota^*\co \bar{H}^2(W, \partial W) \to \bar{H}^2(W)$ 
 induces a pairing $\bar{H}^2(W, \partial W) \times \bar{H}^2(W, \partial W) \to \zz$ which we denote by $\left< \cdot , \cdot \right>$.  Let $\calb$ be a basis for $\bar{H}^2(W,\partial W)$ and let $A$ be a matrix representing the intersection form with respect to $\calb$. By Poincar\'e duality and universal coefficients, there is an isomorphism $\bar{H}^2(W) \cong \text{hom}(\bar{H}^2(W, \partial W), \zz)$. Let $\calb'$ be the dual basis for $\bar{H}^2(W)$.  With respect to these two bases, the map $\iota^*$ is represented by the  matrix $A$.

Suppose that $H^2(\partial W, \qq) \cong  H_1(\partial W, \qq) = 0$ and let $c  \in H^2(W)$.  The square $c ^2$ is defined as follows.  Under restriction, $c $ maps to a torsion element in $H^2(\partial W)$.  Thus, there is an $x \in H^2(W, \partial W)$ which maps to    $n c $ for some integer $n$.  Set $c ^2 = \left< \bar{x} ,\bar{ x}\right> /n^2 \in \qq$. It is straightforward to check that this does not depend on the choices made, and depends only on  $\bar{c}  \in \bar{H}^2(W)$.  If $\bar{c} $ is expressed in terms of the basis $\calb'$, then $\bar{c} ^2$ is given by 
 the rational valued quadratic form determined by the matrix $A^{-1}$.  (Note that since $H^2(\partial W)$ is finite, $A$ represents a surjective map  and thus is invertible.)

Continuing with the assumption that $H_1(\partial W, \qq) = 0$, there is an {\it intersection form}  $\left< \cdot , \cdot \right>\co  \bar{H}_2(W) \times \bar{H_2}(W) \to \zz$ defined using duality to identify $\bar{H}_2(W)$ with $\bar{H}^2(W, \partial W)$.  This form is thus given by the matrix $A$ defined above, with respect to the corresponding choice of basis. Similarly, the map $\iota_* :\bar{H}_2(W) \to \bar{H}_2(W, \partial W)$ is given by $A$.

\section{Linking forms of 3--manifolds}\label{giapp}

  If a rational homology sphere $M$ is the  boundary of a 4--manifold $W$ and $H_1(W)=0$, with intersection form given by a matrix $A$, then minus the linking form on $H_1(M)$ can be identified with $\beta_A$. 
Let $L$ be the lattice $H_2(W,\zz)$  in $H_2(W,\qq)$, and $L^{\#} \subset H_2(W,\qq)$ be the dual lattice with respect to the intersection form on $H_2(W,\zz)$. Then $L^\#$ is carried to $H_2(W,M)\subset H_2(W,M,\qq)$ by $\iota_*$ and $L$ is carried to the image 
$\iota_*: H_2(M,\zz) \rightarrow H_2(W,M,\zz)$.  The linking form on $H_1(M)$ is  presented by the intersection form on 
$H_2(W)$, in the sense of Appendix \ref{alglK}.

More generally, if we no longer assume $H_1(W)=0$,  we have the following result from~\cite{gi3}.

\begin{lemma}\label{githm}
Let $F$ be a  maximal free summand   $H_2(W, \partial W)$.  Then $H_1(\partial W)$ splits as a direct sum $F^* \oplus G$, where $F^*$ is the image of $F$.  The linking form on $G$ is metabolic, and   
the intersection form on $\bar H_2(W)$ 
 presents  
 minus  
the  linking form of $\partial W$ restricted to $F^*$.  
The splitting $F^* \oplus G$  is orthogonal with respect to the linking form.
\end{lemma}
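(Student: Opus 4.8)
Looking at Lemma \ref{githm}, I need to prove the structural decomposition of the linking form on $\partial W$ in terms of the intersection form on $W$, without assuming $H_1(W) = 0$.

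\bigskip

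The plan is to exploit the long exact sequence of the pair $(W, \partial W)$ together with Poincar\'e--Lefschetz duality, tracking how a maximal free summand $F \subset H_2(W, \partial W)$ sits relative to the boundary. First I would set up the fundamental diagram relating $H_2(W)$, $H_2(W, \partial W)$, and $H_1(\partial W)$ via the connecting homomorphism $\partial\co H_2(W, \partial W) \to H_1(\partial W)$. Since $M = \partial W$ is a rational homology sphere, $H_1(\partial W)$ is finite torsion, so the image $F^* := \partial(F)$ is a finite subgroup, and I would show $\partial$ restricted to $F$ has the form of the presentation discussed in Appendix \ref{alglK}: the intersection form $A$ on $\bar H_2(W)$ acts as a $1$--$1$ map of the underlying lattice whose cokernel realizes $F^*$. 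This identifies the linking form on $F^*$ with $\beta_A$ (up to the sign established by the $H_1(W)=0$ case recorded at the start of Appendix \ref{giapp}).

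The key structural step is to produce the complementary summand $G$ and show it is metabolic. Here I would use that $F$ is a \emph{maximal} free summand of $H_2(W, \partial W)$: the complement corresponds to classes coming from $H_2(W)$ (equivalently, from the interior, after accounting for $H_1(W)$), and duality forces the linking form restricted to $G$ to vanish on a half-dimensional subgroup. Concretely, the self-annihilating subgroup is the image of the torsion of $H_2(W, \partial W)$ (or dually of $H_1(W)$) under the boundary map; Poincar\'e duality pairs this subgroup against itself trivially because the corresponding classes bound in $W$. This is the part I expect to be the main obstacle: carefully identifying $G$ and exhibiting an explicit metabolizer requires chasing torsion through the duality isomorphisms $H_2(W, \partial W) \cong H^2(W)$ and $H_1(\partial W) \cong H^2(\partial W)$, and keeping the universal-coefficient Ext contributions straight when $H_1(W) \ne 0$.

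Finally I would verify orthogonality of the splitting $F^* \oplus G$ with respect to the linking form. The cleanest route is to observe that elements of $F^*$ are represented by boundaries of the free classes in $F$, while elements of $G$ come from the torsion/interior part; the linking number of a boundary class with a class bounding in the complement can be computed by intersecting bounding chains in $W$, and the relevant intersection numbers lie in the image of the integer-valued pairing, hence vanish in $\qq/\zz$. I would then assemble these pieces to conclude that $(H_1(\partial W), \lk) \cong (F^*, \beta_A) \oplus (G, \text{metabolic})$ with minus the linking form on $F^*$ presented by the intersection form, completing the proof. Throughout, the technical heart is the duality bookkeeping, so I would cite \cite{gi3} for the detailed lattice-theoretic verification and present the exact-sequence skeleton here.
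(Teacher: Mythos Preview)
The paper does not actually prove this lemma: it is stated in Appendix~\ref{giapp} as a result imported from \cite{gi3}, with no argument given beyond the preceding paragraph handling the special case $H_1(W)=0$. Your proposal likewise closes by deferring the lattice-theoretic details to \cite{gi3}, so at the level of what is written down, your treatment and the paper's coincide. The exact-sequence skeleton you sketch---the connecting map $\partial$ realizing $F^*$ as the cokernel of the map $\bar H_2(W)\to F$ represented by $A$, the metabolizer in $G$ arising from the boundary image of the torsion of $H_2(W,\partial W)$, and orthogonality from integrality of the relevant intersection numbers---is the correct shape of the argument and contains no errors, though as you yourself note the real content is the duality and torsion bookkeeping that you do not carry out.
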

 
\vskip.1in
  
\newcommand{\etalchar}[1]{$^{#1}$}

\end{document}